\numberwithin{equation}{section}
\newtheorem{theorem}{Theorem}[section]
\newtheorem{lemma}[theorem]{Lemma}
\newtheorem{corollary}[theorem]{Corollary}
\newtheorem{remark}[theorem]{Remark}
\newtheorem{proposition}[theorem]{Proposition}
\newtheorem{assumption}[theorem]{Assumption}
\newcommand{\dd}{\,\mathrm{d}}
\newcommand{\R}{\mathbb{R}}
\newcommand{\N}{\mathbb{N}}
\newcommand{\E}{\mathbb{E}}
\newcommand{\1}{\mathbf{1}}
\title[Stochastic Volterra equations with H{\"o}lder diffusion coefficients]{Stochastic Volterra equations \\ with H{\"o}lder diffusion coefficients}
\author[Pr{\"o}mel]{David J. Pr{\"o}mel}
\address{David J. Pr{\"o}mel, University of Mannheim, Germany}
\email{proemel@uni-mannheim.de}
\author[Scheffels]{David Scheffels}
\address{David Scheffels, University of Mannheim, Germany}
\email{dscheffe@mail.uni-mannheim.de}
\date{\today}
\begin{document}
 
\begin{abstract}
  The existence of strong solutions and pathwise uniqueness are established for one-dimensional stochastic Volterra equations with locally H{\"o}lder continuous diffusion coefficients and sufficiently regular kernels. Moreover, we study the sample path regularity, the integrability and the semimartingale property of solutions to one-dimensional stochastic Volterra equations.
\end{abstract}

\maketitle

\noindent \textbf{Key words:} H{\"o}lder regularity, stochastic Volterra equation, pathwise uniqueness, non-Lipschitz coefficient, semimartingale, strong solution, Yamada--Watanabe theorem.

\noindent \textbf{MSC 2020 Classification:} 60H20, 45D05.



\section{Introduction}

Stochastic Volterra equations (SVEs) have been studied in probability theory starting with the works of Berger and Mizel~\cite{Berger1980a,Berger1980b}. This class of integral equations constitutes a generalization of ordinary stochastic differential equations and serves as well suited mathematical model for numerous random phenomena appearing, e.g., in biology, physics and mathematical finance.

In the present work, we investigate the strong existence and pathwise uniqueness of solutions to one-dimensional stochastic Volterra equations with locally H{\"o}lder continuous diffusion coefficients and sufficiently regular kernels. More precisely, we consider SVEs of the form
\begin{equation}\label{eq:intro}
  X_t = x_0(t)+\int_0^t K_{\mu}(s,t)\mu(s,X_s)\dd s+\int_0^t K_{\sigma}(s,t)\sigma(s,X_s)\dd B_s,\quad t\in [0,T],
\end{equation}
where $x_0$ denotes the initial condition, $(B_t)_{t\in [0,T]}$ is a Brownian motion, the kernels $K_\mu, K_\sigma$ are sufficiently regular functions, the coefficient~$\mu$ is locally Lipschitz continuous, and the diffusion coefficient~$\sigma$ is locally H{\"o}lder continuous.

The motivation to study stochastic Volterra equations with non-Lipschitz coefficients is twofold. On the one hand, it is a natural question to explore to what extent the famous results of Yamada and Watanabe~\cite{Yamada1971}, ensuring pathwise uniqueness and the existence of strong solutions for ordinary stochastic differential equations, generalizes to stochastic Volterra equations. On the other hand, stochastic Volterra equations with only $1/2$-H{\"o}lder continuous coefficients recently got a great deal of attention in mathematical finance as so-called rough volatility models, see e.g. \cite{AbiJaberElEuch2019b,ElEuch2019}, which have demonstrated to fit remarkably well historical and implied volatilities of financial markets, see e.g. \cite{Bayer2016}. Furthermore, SVEs with non-Lipschitz continuous coefficients arise as scaling limits of branching processes in population genetics, see \cite{Mytnik2015,AbiJaber2021b}.

The existence of unique strong solutions for stochastic Volterra equations with Lipschitz continuous coefficients is well investigated. Indeed, classical existence and uniqueness results for SVEs with sufficiently regular kernels are due to \cite{Berger1980a,Berger1980b,Protter1985}. These results have been generalized in various directions such as allowing for anticipating and path-dependent coefficients \cite{Pardoux1990,Oksendal1993,Alos1997,Kalinin2021}, singular kernels \cite{Cochran1995,Coutin2001} or an infinite dimensional setting \cite{Zhang2010}. A slight extension beyond Lipschitz continuous coefficients can be found in \cite{Wang2008}.

The classical approach to prove the existence of strong solutions to ordinary stochastic differential equations with less regular diffusion coefficients is to first show the existence of a weak solution, since this, in combination with pathwise uniqueness, guarantees the existence of a strong solution, see~\cite{Yamada1971}. Only recently, the existence of weak solutions for stochastic Volterra equations was derived in the work of Abi Jaber, Cuchiero, Larsson and Pulido~\cite{AbiJaber2021} (see also \cite{Mytnik2015,AbiJaber2019,AbiJaber2021b}), assuming that the kernels in the stochastic Volterra equations are of convolution type, i.e. in our setting $K_\mu (s,t)= K_\sigma(s,t)=K(t-s)$ for some function $K\colon \R\to \R$. Assuming additionally that the coefficients $\mu,\sigma$ lead to affine Volterra processes, weak uniqueness was obtained in \cite{Mytnik2015,AbiJaberElEuch2019,AbiJaber2021b,Cuchiero2020}. However, as we do not impose a convolution structure on the stochastic Volterra equation~\eqref{eq:intro}, we cannot rely on the known results regarding the existence of weak solutions.

Our first main contribution is to establish the existence of a strong solution to the SVE~\eqref{eq:intro} provided the diffusion coefficient~$\sigma$ is locally $1/2+\xi$-H{\"o}lder continuous for $\xi \in [0,1/2]$. To that end, we prove the convergence of an Euler type approximation of the SVE~\eqref{eq:intro} and do not use the concept of weak solutions. For ordinary stochastic differential equations such an approach was developed by Gy{\"o}ngy and R{\'a}sonyi~\cite{Gyongy2011}, using ideas coming from~\cite{Yamada1971}. As a number of results used to deal with ordinary stochastic differential equations are not available in the context of SVEs, the presented proof for the existence of a strong solution to the SVE~\eqref{eq:intro} requires various different techniques such as a transformation formula for Volterra processes {\`a} la Protter~\cite{Protter1985} and a Gr{\"o}nwall lemma allowing weakly singular kernels.

Our second main contribution is to establish pathwise uniqueness for the SVE~\eqref{eq:intro} provided that the diffusion coefficient~$\sigma$ is locally $1/2+\xi$-H{\"o}lder continuous for $\xi \in [0,1/2]$ or even, more generally, satisfies the classical Yamada--Watanabe condition~\cite{Yamada1971}. To that end, we generalize the classical approach of Yamada and Watanabe~\cite{Yamada1971} to the more general setting of stochastic Volterra equations. The presented proof for pathwise uniqueness is based on similar techniques as the proof of existence and is inspired by the work of Mytnik and Salisbury~\cite{Mytnik2015}. In~\cite{Mytnik2015}, pathwise uniqueness is proven for one-dimensional stochastic Volterra equations with smooth kernels and without drift (i.e. $\mu=0$). For SVEs of convolutional type with continuous differentiable kernels admitting a resolvent of the first kind, pathwise uniqueness was shown in \cite{AbiJaberElEuch2019b}.

Let us remark, while we need to require sufficient regularity on the kernels $K_\mu, K_\sigma$ to obtain the existence of a unique strong solution (see Theorem~\ref{thm:main result} and Corollary~\ref{cor:main result}), the imposed regularity conditions on the coefficients are essentially the classical regularity conditions of Yamada--Watanabe. Already in case of ordinary stochastic differential equations, it is well-known that these regularity conditions cannot be relaxed in the sense that pathwise uniqueness does not hold in general if, e.g., the diffusion coefficient~$\sigma$ is only H{\"o}lder continuous of order strictly less than~$1/2$.

\medskip

\noindent \textbf{Organization of the paper:} Section~\ref{sec:main result} presents the setting and main result: an existence and uniqueness theorem for stochastic Volterra equations with H{\"o}lder continuous diffusion coefficients. The properties of solutions to SVEs are provided in Section~\ref{sec:regularity}. The existence of a strong solution is proven in Section~\ref{sec:existence} and that pathwise uniqueness holds in Section~\ref{sec:pathwise uniqueness}.

\medskip

\noindent\textbf{Acknowledgments:} D. Scheffels gratefully acknowledges financial support by the Research Training Group ``Statistical Modeling of Complex Systems'' (RTG 1953) funded by the German Science Foundation (DFG).

\section{Main result and assumptions}\label{sec:main result}

Let $(\Omega,\mathcal{F},(\mathcal{F}_t)_{t\in [0,T]},\mathbb{P})$ be a filtered probability space, which satisfies the usual conditions, $(B_t)_{t\in [0,T]}$ be a standard Brownian motion and $T\in (0,\infty)$. We consider the one-dimensional stochastic Volterra equation (SVE)
\begin{equation}\label{eq:SVE}
  X_t = x_0(t)+\int_0^t K_{\mu}(s,t)\mu(s,X_s)\dd s+\int_0^t K_{\sigma}(s,t)\sigma(s,X_s)\dd B_s,\quad t\in [0,T],
\end{equation}
where $x_0\colon [0,T]\to \R$ is a continuous function, the coefficients $\mu,\sigma\colon [0,T]\times\R\to\R $ and the kernels $K_\mu, K_\sigma\colon \Delta_T\to \R$ are measurable functions, using the standard notation $\Delta_T~:=~\lbrace (s,t)\in [0,T]\times [0,T]\colon \, 0\leq s\leq t\leq T \rbrace$. Furthermore, $\int_0^t K_{\mu}(s,t)\mu(s,X_s)\dd s$ is defined as a Riemann--Stieltjes integral and $\int_0^t K_{\sigma}(s,t)\sigma(s,X_s)\dd B_s$ as an It{\^o} integral.

\medskip

Let $K\colon \Delta_T\to\R$ be a measurable function. We say $K(\cdot,t)$ is absolutely continuous for every $t\in [0,T]$ if there exists an integrable function $\partial_1 K\colon \Delta_T \to\R$ such that $K(s,t)-K(0,t)=\int_0^s \partial_1 K(u,t)\dd u$ for $(s,t)\in\Delta_T$. We say $K(s,\cdot)$ is absolutely continuous for every $s\in [0,T]$ if there exists an integrable function $\partial_2 K\colon \Delta_T \to\R$ such that $K(s,t)-K(s,0)=\int_0^t \partial_2 K(s,u)\dd u$ for $(s,t)\in\Delta_T$. Moreover, for $p\in [1,\infty)$, we denote $K\in L^p(\Delta_T)$ if $ \int_0^T\int_0^t |K(s,t)|^p \dd s\dd t<\infty$.

\medskip

For the kernels $K_\mu, K_\sigma$ and the initial condition $x_0$ we make the following assumptions.

\begin{assumption}\label{ass:kernels}
  Let $\gamma\in (0,\frac{1}{2}]$, and $K_\mu, K_\sigma\colon \Delta_T\to \R$ and $x_0\colon [0,T]\to \R$ be continuous functions such that:
  \begin{enumerate}
    \item[(i)] $K_\mu(s,\cdot)$ is absolutely continuous for every $s\in [0,T]$ and $\partial_2 K_\mu $ is bounded on~$\Delta_T$.

    \item[(ii)] $K_\sigma(\cdot,t)$ is absolutely continuous for every $t\in [0,T]$, $K_\sigma(s,\cdot)$ is absolutely continuous for every $s\in [0,T]$ with $\partial_2 K_\sigma\in L^{2}(\Delta_T)$, and $\partial_2 K_\sigma(\cdot,t)$ is  absolutely continuous for every $t\in[0,T]$. Furthermore, there is a constant $C>0$ such that $|K_\sigma(t,t)|\geq C$ for any $t\in [0,T]$, and there exist $C>0$, $\alpha\in [0,\frac{1}{2})$ and $\epsilon>0$ such that
    \begin{align*}
      &\int_0^s |K_{\sigma}(u,t)-K_{\sigma}(u,s)|^{2+\epsilon}\dd u \leq C|t-s|^{\gamma (2+\epsilon)}
      \text{ and}\\
      &|\partial_1 K_\sigma (s,t)| +|\partial_2 K_\sigma (s,s)| +\int_s^t |\partial_{21}K_\sigma(s,u)|\dd u\leq C(t-s)^{-\alpha}
    \end{align*}
    hold for any $(s,t)\in\Delta_T$.

    \item[(iii)] $x_0$ is $\beta$-H{\"o}lder continuous for every $\beta \in (0, \gamma)$.
  \end{enumerate}
\end{assumption}

The regularity properties of the coefficients~$\mu$ and~$\sigma$ are formulated in the next assumption. We start with assuming global Lipschitz and H{\"o}lder continuity of $\mu$ and~$\sigma$, respectively. An extension to local regularity conditions are treated in Corollary~\ref{cor:main result} below.

\begin{assumption}\label{ass:coefficients}
  Let $\mu,\sigma\colon [0,T]\times\R\to\R $ be measurable functions such that:
  \begin{enumerate}
    \item[(i)] $\mu$ and $\sigma$ are of linear growth, i.e. there is a constant $C_{\mu,\sigma}>0$ such that
    \begin{equation*}
      |\mu(t,x)|+|\sigma(t,x)|\leq C_{\mu,\sigma}(1+|x|),
    \end{equation*}
    for all $t\in [0,T]$ and $x\in\R$.
    \item[(ii)] $\mu$ is Lipschitz continuous and $\sigma$ is H{\"o}lder continuous of order $\frac{1}{2}+\xi$ for some $\xi\in [0,\frac{1}{2}]$ in the space variable uniformly in time, i.e. there are constants $C_\mu,C_\sigma>0$ such that
    \begin{equation*}
      |\mu(t,x)-\mu(t,y)|\leq C_\mu|x-y|
      \quad\text{and}\quad
      |\sigma(t,x)-\sigma(t,y)|\leq C_\sigma|x-y|^{\frac{1}{2}+\xi}
    \end{equation*}
    hold for all $t\in [0,T]$ and $x,y\in \R$.
  \end{enumerate}
\end{assumption}

To formulate our results, let us briefly recall the concepts of strong solutions and pathwise uniqueness. For this purpose, let $L^p(\Omega\times [0,T])$ be the space of all real-valued, $p$-integrable functions on $\Omega\times [0,T]$. We call an $(\mathcal{F}_t)_{t\in[0,T]}$-progressively measurable stochastic process $(X_t)_{t\in [0,T]}$ in $L^p(\Omega\times [0,T])$ on the given probability space $(\Omega,\mathcal{F},(\mathcal{F}_t)_{t\in[0,T]},\mathbb{P})$, a \textit{(strong) $L^p$-solution} of the SVE~\eqref{eq:SVE} if $ \int_0^t (|K_\mu(s,t)\mu(s,X_s)|+|K_\sigma(s,t)\sigma(s,X_s)|^2 )\dd s<\infty$ for all $t\in[0,T]$ and the integral equation~\eqref{eq:SVE} hold $\mathbb{P}$-almost surely. As usual, a strong $L^1$-solution $(X_t)_{t\in [0,T]}$ of the SVE~\eqref{eq:SVE} is often just called \textit{solution} of the SVE~\eqref{eq:SVE}. We say \textit{pathwise uniqueness} in $L^p(\Omega\times [0,T])$ holds for the SVE~\eqref{eq:SVE} if $\mathbb{P}(X_t=\tilde{X}_t, \,\forall t\in [0,T])=1$ for two $L^p$-solutions $(X_t)_{t\in[0,T]}$ and $(\tilde{X}_t)_{t\in[0,T]}$ of the SVE~\eqref{eq:SVE} defined on the same probability space $(\Omega,\mathcal{F},(\mathcal{F}_t)_{t\in[0,T]},\mathbb{P})$. Moreover, we say there exists a \textit{unique strong $L^p$-solution} $(X_t)_{t\in [0,T]}$ to the SVE~\eqref{eq:SVE} if $(X_t)_{t\in [0,T]}$ is a strong $L^p$-solution to the SVE~\eqref{eq:SVE} and pathwise uniqueness in $L^p(\Omega\times [0,T])$ holds for the SVE~\eqref{eq:SVE}. We say $(X_t)_{t\in [0,T]}$ is $\beta$-H{\"o}lder continuous for $\beta \in (0,1]$ if there exists a modification of $(X_t)_{t\in [0,T]}$ with sample paths that are $\mathbb{P}$-almost surely $\beta$-H{\"o}lder continuous.

\medskip

The main results of the present work are summarized in the following theorem. 

\begin{theorem}\label{thm:main result}
  Suppose Assumptions~\ref{ass:kernels} and~\ref{ass:coefficients}, and let $p>\max\{\frac{1}{\gamma},1+\frac{2}{\epsilon}\}$, where $\gamma\in(0,\frac{1}{2}]$ and $\epsilon>0$ are given by Assumption~\ref{ass:kernels}. Then, there exists a unique strong $L^p$-solution~$(X_t)_{t\in [0,T]}$ to the stochastic Volterra equation~\eqref{eq:SVE}. Moreover, the solution $(X_t)_{t\in [0,T]}$ is $\beta$-H{\"o}lder continuous for every $\beta \in (0, \gamma)$, $\sup_{t\in [0,T]}\E [|X_t|^q]<\infty$ for every $q\in [1,\infty)$ and $(X_t-x_0(t))_{t\in [0,T]}$ is a semimartingale.
\end{theorem}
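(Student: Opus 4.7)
The proof naturally splits into two independent parts---existence of a strong $L^p$-solution and pathwise uniqueness---together with the auxiliary properties (H\"older regularity, $L^q$-integrability, semimartingale decomposition) which follow once a solution is in hand. I would treat uniqueness and existence separately in the spirit of Yamada--Watanabe, rather than via a weak-existence-plus-uniqueness route, since the latter is unavailable here (the kernels $K_\mu,K_\sigma$ need not be of convolution type, so the weak existence results of Abi Jaber et al.\ do not apply).

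For pathwise uniqueness I would adapt the classical Yamada--Watanabe smoothing argument to the Volterra setting. Given two $L^p$-solutions $X,\tilde X$, introduce the standard sequence of non-negative $C^2$ functions $\phi_n$ approximating $|x|$, with $\phi_n''(x)\cdot|\sigma(\cdot,x)-\sigma(\cdot,y)|^2$ bounded by $2/n$ in the H\"older-$1/2+\xi$ regime. The key obstacle is that $t\mapsto \int_0^t K_\sigma(s,t)\sigma(s,X_s)\dd B_s$ is \emph{not} a martingale in $t$, so It\^o's formula cannot be invoked directly on $\phi_n(X_t-\tilde X_t)$. I would resolve this through a Protter-type transformation: using Assumption~\ref{ass:kernels}(ii), write
\[
K_\sigma(s,t)=K_\sigma(s,s)+\int_s^t \partial_2 K_\sigma(s,u)\dd u,
\]
apply a stochastic Fubini theorem to rearrange the Volterra stochastic integral as a genuine It\^o semimartingale plus a drift involving $\partial_2 K_\sigma$, and only then invoke It\^o on $\phi_n$. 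The H\"older bound on $K_\sigma(u,t)-K_\sigma(u,s)$, together with the lower bound $|K_\sigma(t,t)|\ge C$, is then used to isolate the diagonal contribution $K_\sigma(t,t)(\sigma(t,X_t)-\sigma(t,\tilde X_t))$ whose quadratic variation is the one controlled by the Yamada--Watanabe trick. A Gronwall lemma tolerating weakly singular kernels (applicable because $\alpha<1/2$) then closes the estimate and forces $X=\tilde X$.

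For existence I would adopt the Gy\"ongy--R\'asonyi strategy: construct an Euler-type approximation $X^n$ of \eqref{eq:SVE} on the equidistant mesh of size $T/n$, establish tightness of $\{X^n\}$ in $C([0,T];\R)$, and show that every weak limit solves \eqref{eq:SVE}. Tightness comes from uniform $L^p$-moment bounds plus a Kolmogorov criterion, where the kernel regularity in Assumption~\ref{ass:kernels} combined with BDG and the linear growth of $\mu,\sigma$ yields H\"older increments of order $\beta<\gamma$ uniformly in $n$. Once pathwise uniqueness is in hand, a Gy\"ongy--Krylov-type argument on the pair $(X^n,X^m)$ upgrades convergence in law to convergence in probability along the full sequence, producing a strong solution adapted to $(\mathcal{F}_t)$. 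I expect the principal obstacle here to be the uniform-in-$n$ H\"older and $L^p$ control of the Euler scheme in the absence of Lipschitz $\sigma$; this requires an iterative use of BDG together with the singular Gronwall lemma and the hypothesis $\partial_2 K_\sigma\in L^2(\Delta_T)$, and is precisely where the threshold $p>\max\{1/\gamma,1+2/\epsilon\}$ enters.

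The three remaining statements then follow as corollaries. H\"older continuity of order $\beta\in(0,\gamma)$ is a direct consequence of the $L^p$-increment estimate for $X$ (inherited from the corresponding bound on $X^n$) together with Kolmogorov's continuity theorem. The moment bound $\sup_{t\in[0,T]}\E[|X_t|^q]<\infty$ for every $q\in[1,\infty)$ is obtained by bootstrapping: starting from the $L^p$ solution, apply BDG at exponent $q$ and close via the singular Gronwall lemma, iterating the argument since the kernel integrability is independent of $q$. Finally, for the semimartingale property of $X-x_0$, I would again use the decomposition
\[
K_\sigma(s,t)=K_\sigma(s,s)+\int_s^t \partial_2 K_\sigma(s,u)\dd u
\]
and the analogous identity for $K_\mu$ (available from Assumption~\ref{ass:kernels}(i),(ii)), interchange the order of integration by classical and stochastic Fubini, and obtain
\[
X_t-x_0(t)=\int_0^t K_\sigma(s,s)\sigma(s,X_s)\dd B_s+\int_0^t A_u\dd u,
\]
where $A_u$ collects the drift contributions. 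This exhibits $X-x_0$ as the sum of a local martingale and an absolutely continuous process, i.e.\ a semimartingale.
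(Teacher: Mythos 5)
Your uniqueness argument follows the paper essentially verbatim: Yamada--Watanabe smoothing of the absolute value, a Protter-type transformation via $K_\sigma(s,t)=K_\sigma(s,s)+\int_s^t\partial_2K_\sigma(s,u)\dd u$ and stochastic Fubini to obtain a genuine semimartingale before invoking It\^o, the lower bound $|K_\sigma(t,t)|\ge C$ to control $\E[|\tilde Y_t|]$ by $\E[|\tilde X_t|]$ and close the system of inequalities, and a Gr\"onwall lemma for weakly singular kernels. The auxiliary properties (moment bounds, H\"older regularity via Kolmogorov, semimartingale decomposition via Fubini) are also exactly the paper's Lemmas on regularity, integrability and the semimartingale property.

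For existence, however, you take a genuinely different route, and it is the weaker part of the proposal. The paper does \emph{not} prove tightness and pass to weak limits: it shows directly that the Euler approximations $(X^m)$ form a Cauchy sequence in $L^p(\Omega\times[0,T])$, by applying the same Yamada--Watanabe/Gy\"ongy--R\'asonyi smoothing $\phi_{\delta\epsilon}$ to the \emph{consecutive differences} $X^{m+1}-X^m$ and running the singular Gr\"onwall estimate on $\E[|X^{m+1}_t-X^m_t|]$, with a mesh $T/2^{m^5}$ chosen so that the resulting constants $C_m$ are summable. This yields pathwise convergence on the original space, so adaptedness and strength of the limit are automatic, and the passage to the limit in the Volterra integrals is done in $L^p$ using the H\"older continuity of $\sigma$. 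Your tightness-plus-Gy\"ongy--Krylov scheme leaves unaddressed precisely the step the authors designed their argument to avoid: identifying a weak limit of the Euler scheme as a solution of \eqref{eq:SVE} requires passing to the limit in $\int_0^tK_\sigma(s,t)\sigma(s,X^n(\kappa_n(s)))\dd B_s$ under convergence in law, and for non-convolution kernels there is no martingale-problem or weak-existence machinery available (the paper explicitly notes that the results of Abi Jaber et al.\ do not apply). This identification is not routine and would need a Kurtz--Protter-type argument adapted to the two-parameter kernel, uniformly in $t$; without it your existence proof has a genuine gap. A second, smaller point: in the borderline case $\xi=0$ the two-parameter smoothing $\phi_{\delta\epsilon}$ (rather than the classical one-parameter Yamada--Watanabe sequence) is essential in the paper's Cauchy estimate; your sketch does not engage with why the $1/2$-H\"older case closes.
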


\begin{proof}
  The existence of a strong solution~$(X_t)_{t\in [0,T]}$ to the stochastic Volterra equation~\eqref{eq:SVE} is provided by Theorem~\ref{thm:existence} and its pathwise uniqueness by Theorem~\ref{thm:uniqueness}. The assertions that $\sup_{t\in [0,T]}\E [|X_t|^q]<\infty$ for every $q\in [1,\infty)$ and of the $\beta$-H{\"o}lder continuity as well as the semimartingale property of $(X_t-x_0(t))_{t\in [0,T]}$ follow by Corollary~\ref{cor:regularity}.
\end{proof}

Note that the regularity assumptions (Assumption~\ref{ass:coefficients}), as required in Theorem~\ref{thm:main result}, on the coefficients~$\mu,\sigma$ are essentially optimal. Indeed, it is well-known for ordinary stochastic differential equations that pathwise uniqueness does not hold in general if $\mu$ is only H{\"o}lder continuous of order strictly less than $1$ or $\sigma$ is only H{\"o}lder continuous of order strictly less than~$1/2$, see for instance \cite[page~287]{Karatzas1991} and \cite[Chapter~5, Example~2.15]{Karatzas1991}.

\begin{remark}
  Recall that Yamada and Watanabe derived pathwise uniqueness for ordinary stochastic differential equations under the slightly weaker assumption of $|\sigma(t,x)-\sigma(t,y)|\leq \rho(|x-y|)$ for a function $\rho\colon [0,\infty) \to [0,\infty)$ with $\int_0^{\epsilon} \rho(s)^{-2}\dd s=\infty$ for every $\epsilon >0$, cf. \cite[Theorem~1]{Yamada1971}. While the proof of pathwise uniqueness presented in Section~\ref{sec:pathwise uniqueness} is given under this Yamada--Watanabe condition, in the proof of the existence of a strong solution via an approximation scheme the H{\"o}lder regularity of~$\sigma$ is explicitly used in various estimates, see e.g. \eqref{eq:YW_notenough}, and a modification of these estimates allowing for the Yamada--Watanabe condition appears not straightforward.
\end{remark}

\begin{remark}
  Assumption~\ref{ass:kernels} is satisfied, for instance, if $K_{\mu}$ is
  continuously differentiable, $K_{\sigma}$ is twice continuously differentiable with $K_{\sigma}(t,t)>0$ for $t\in [0,T]$ and $x_0$ is $\beta$-H{\"o}lder continuous for some $\beta \in (0,1)$.

  While the condition $|K_\sigma(t,t)|\geq C$ for $t\in [0,T]$ is crucial for implementing the present method to prove Theorem~\ref{thm:main result}, it might appear to be of technical nature. However, assuming $K_\sigma(t,t)=0$ for every $t\in [0,T]$ and keeping in mind the semimartingale decomposition in Lemma~\ref{lem:semimartingale property}, any solution of the SVE~\eqref{eq:SVE} would be a semimartingale of bounded variation without any diffusion part and, thus, some care is needed to not lose the regularization effects of a Brownian motion.
\end{remark}

Based on a localization argument, the assumptions of global Lipschitz and H\"older continuity on the coefficients of the SVE~\eqref{eq:SVE} can be relaxed to local regularity assumptions. In the following, $C>0$ denotes a generic constant that might change from line to line. To emphasize the dependence of the constant $C$ on parameters $p,q$ or functions $f,g$, we write $C_{p,q,f,g}$. Moreover, for $x,y\in \R$ we set $x\wedge y:= \min \{x,y\}$.

\begin{corollary}\label{cor:main result}
  Suppose Assumptions~\ref{ass:kernels}, \ref{ass:coefficients}~(i), and that $\mu$ is locally Lipschitz continuous and $\sigma$ is locally H{\"o}lder continuous of order $\frac{1}{2}+\xi$ for some $\xi\in [0,\frac{1}{2}]$ in the space variable uniformly in time, i.e. for every $n\in \N$ there are constants $C_{\mu,n},C_{\sigma,n}>0$ such that
  \begin{equation*}
    |\mu(t,x)-\mu(t,y)|\leq C_{\mu,n}|x-y|
    \quad\text{and}\quad
    |\sigma(t,x)-\sigma(t,y)|\leq C_{\sigma,n}|x-y|^{\frac{1}{2}+\xi}
  \end{equation*}
  hold for all $t\in [0,T]$ and $x,y\in \R$ with $|x|,|y|\leq n$. Let $p>\max\{\frac{1}{\gamma},1+\frac{2}{\epsilon}\}$, where $\gamma\in(0,\frac{1}{2}]$ and $\epsilon>0$ are given by Assumption~\ref{ass:kernels}. Then, there exists a unique strong $L^p$-solution~$(X_t)_{t\in [0,T]}$ to the stochastic Volterra equation~\eqref{eq:SVE}. Moreover, the solution $(X_t)_{t\in [0,T]}$ is $\beta$-H{\"o}lder continuous for every $\beta \in (0, \gamma)$, $\sup_{t\in [0,T]}\E [|X_t|^q]<\infty$ for every $q\in [1,\infty)$ and $(X_t-x_0(t))_{t\in [0,T]}$ is a semimartingale.
\end{corollary}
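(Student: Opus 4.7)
The plan is a standard localization argument that reduces the claim to Theorem~\ref{thm:main result}. First I would introduce the cutoff $\rho_n(x) := (x\wedge n)\vee(-n)$ and set $\mu_n(t,x) := \mu(t,\rho_n(x))$, $\sigma_n(t,x) := \sigma(t,\rho_n(x))$. By the local hypotheses, $\mu_n$ is globally Lipschitz with constant $C_{\mu,n}$ and $\sigma_n$ is globally H{\"o}lder of order $\frac{1}{2}+\xi$ with constant $C_{\sigma,n}$; since $|\rho_n(x)|\leq |x|$ and $\mu,\sigma$ have linear growth with a common constant $C_{\mu,\sigma}$, both truncations inherit this same linear growth bound, with constant independent of~$n$. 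Hence Theorem~\ref{thm:main result} produces, for each~$n$, a unique strong $L^p$-solution $(X^n_t)_{t\in[0,T]}$ of the truncated SVE with the full regularity package, and Corollary~\ref{cor:regularity} yields $\sup_n \E[\sup_{t\in[0,T]}|X^n_t|^q]<\infty$ for every $q\in[1,\infty)$, the crucial point being that this bound depends only on $C_{\mu,\sigma}$ and not on $C_{\mu,n},C_{\sigma,n}$.

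Next I would set $\tau_n := \inf\{t\in[0,T]\colon |X^n_t|\geq n\}\wedge T$ and observe that, since $|X^n_s|\leq n$ on $[0,\tau_n]$, the truncated coefficients agree with $\mu,\sigma$ along the path, so $X^n$ solves the original SVE~\eqref{eq:SVE} on $[0,\tau_n]$. The key consistency statement is: for $m>n$ and $\tau_{n,m} := \inf\{t\colon |X^m_t|\geq n\}\wedge T$, one has $X^n_t = X^m_t$ for $t\in[0,\tau_n\wedge \tau_{n,m}]$. On this stochastic interval both processes satisfy the original SVE and remain in $[-n,n]$, so only the local constants $C_{\mu,n},C_{\sigma,n}$ are needed. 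The Yamada--Watanabe-style proof of Theorem~\ref{thm:uniqueness} adapts to this stochastic interval essentially verbatim, since every estimate there is performed over a running window $[0,t]$ and survives replacing~$t$ by $t\wedge(\tau_n\wedge \tau_{n,m})$. From the equality on $[0,\tau_n\wedge \tau_{n,m}]$ one deduces, by path continuity, $\tau_{n,m}=\tau_n$, whence $\tau_m\geq \tau_n$; thus $(\tau_n)$ is non-decreasing.

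Non-explosion then follows from the uniform moment bound: by Markov's inequality $\mathbb{P}(\tau_n<T)\leq \mathbb{P}(\sup_{t\in[0,T]}|X^n_t|\geq n)\leq C n^{-q}\to 0$ for $q$ large, so $\tau_n\uparrow T$ almost surely. Setting $X_t := X^n_t$ on $\{t\leq \tau_n\}$ defines, consistently in $n$, a process on $[0,T]$ which by the paragraph above solves~\eqref{eq:SVE}. The $\beta$-H{\"o}lder continuity, the bound $\sup_{t\in[0,T]}\E[|X_t|^q]<\infty$, and the semimartingale decomposition of $X-x_0$ transfer from $X^n$ to $X$ via Fatou and the identity $X=X^n$ on $[0,\tau_n]$ together with $\tau_n\uparrow T$. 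Pathwise uniqueness under the local hypotheses is obtained by the same localization: for two $L^p$-solutions $X,\tilde X$ one applies the stopping-time version of the uniqueness argument up to the minimum of the exit times of $X,\tilde X$ from $[-n,n]$, and lets $n\to\infty$.

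The step I expect to be the main obstacle is the local-in-time uniqueness invoked in the second paragraph. Unlike ordinary SDEs, an SVE cannot be trivially \emph{stopped}, because the kernel $K(s,t)$ couples the whole interval $[0,t]$ into the value~$X_t$, so the difference equation at time~$t$ inherently involves the integrals $\int_0^t K(s,t)(\dots)\dd s$, which do not lend themselves to a naive cutoff in~$t$. The saving grace is that the Yamada--Watanabe proof of Theorem~\ref{thm:uniqueness} only estimates the difference of two solutions via integrals of the form $\int_0^t\dots\dd s$, so running the same estimates with $t\wedge\tau$ in place of $t$ preserves them; nevertheless, verifying that every ingredient (most notably the Gr{\"o}nwall lemma for weakly singular kernels alluded to in the introduction) is compatible with the stopping~$\tau$ requires careful bookkeeping, and this is the place where the Volterra structure genuinely complicates the otherwise routine localization.
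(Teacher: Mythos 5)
Your proposal is correct and follows essentially the same route as the paper: truncate the coefficients at level $n$ (so that Theorem~\ref{thm:main result} applies with the same linear-growth constant), patch the resulting solutions together along the exit times, prove consistency by running the stopped Yamada--Watanabe estimates on the stochastic interval, and deduce non-explosion from moment bounds that are uniform in $n$. The only point you gloss over is that upgrading $\sup_{t}\E[|X^n_t|^q]<\infty$ to a bound on $\E[\sup_{t}|X^n_t|^q]$ (needed for your Markov step) requires the Garsia--Rodemich--Rumsey inequality together with the increment estimate of Lemma~\ref{lem:help_regularity}, which is exactly how the paper closes this step.
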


\begin{proof}
  By Assumptions~\ref{ass:kernels} and \ref{ass:coefficients}~(i), Lemma~\ref{lem:bound}, Corollary~\ref{cor_regularity} and Lemma~\ref{lem:semimartingale property} imply the integrability, $\beta$-H{\"o}lder continuity and semimartingale property of the solution. For the well-posedness, we adapt the proofs of Theorem~\ref{thm:existence} and ~\ref{thm:uniqueness} and the notation therein.

  For the uniqueness, consider two $L^p$-solutions $(X^1_t)_{t\in[0,T]}$ and $(X^2_t)_{t\in[0,T]}$, and define $\tilde{X}_t:=X^1_t-X_t^2$ for $t\in[0,T]$ and the hitting times $\tau_k:=\inf\lbrace t\in[0,T]\colon \max\lbrace |X_t|,|Y_t|\rbrace \geq k \rbrace\wedge T$ for $k\in\N$ which are stopping times with $\tau_k\to T$ a.s. by the same reasoning as for the hitting times defined in \eqref{def:hittingtimes}. By bounding $\phi_n(\tilde{X}_t \mathbbm{1}_{\lbrace t\leq \tau_k \rbrace})\leq \phi_n(\tilde{X}_{t\wedge\tau_k})$ and applying It{\^o}'s formula to the right-hand-side, we obtain after performing the same steps as in \eqref{eqmean}-\eqref{i4} and sending $n\to\infty$, that
  \begin{align*}
    &\E[|\tilde{X}_{t}|\mathbbm{1}_{\lbrace t\leq \tau_k \rbrace}]\\
    &\quad\leq  C \int_0^t\E[|\tilde{X}_{s}|\mathbbm{1}_{\lbrace s\leq \tau_k \rbrace}]\dd s
    + \int_0^t \E[|\tilde{Y}_{s}|\mathbbm{1}_{\lbrace s\leq \tau_k \rbrace}] \bigg( \partial_2 K_\sigma(s,s) + \int_s^t |\partial_{21}K_\sigma(s,u)|\dd u \bigg)\dd s,
  \end{align*}
  for $t\in[0,T]$. Similarly, we get a bound on $\E[|\tilde{Y}_{t}|\mathbbm{1}_{\lbrace t\leq \tau_k \rbrace}]$ analogue to \eqref{part2i} and denoting
  \begin{equation*}
    M_k(t):=\sup\limits_{s\in [0,t]}\left(\E[|\tilde{X}_{s}|\mathbbm{1}_{\lbrace s\leq \tau_k \rbrace}] + \E[|\tilde{Y}_{s}|\mathbbm{1}_{\lbrace s\leq \tau_k \rbrace}]\right)
  \end{equation*}
  we obtain $M_k(t)=0$ for all $t\in[0,T]$, and sending $k\to\infty$ yields the uniqueness.
  
  For the existence, we adapt the standard localization argument from the SDE case. We introduce for $n\in\N$ the localized coefficients
  \begin{equation*}
    \mu_n(t,x):=\begin{cases}
    \mu(t,x),\quad&\text{if }|x|\leq n,\\
    \mu(t,\frac{nx}{|x|}),\quad&\text{if }|x|>n,
    \end{cases}
  \end{equation*}
  and analogously $\sigma_n$, which fulfill the regularity properties globally, such that corresponding strong solutions exist by Theorem~\ref{thm:existence} that we denote by $X^n$. Moreover, let $\kappa_n:=\inf\lbrace t\in[0,T]\colon |X^n_t|>n \rbrace\wedge T$ and define $X(t):=X^n(t)$ for $\kappa_{n-1}<t\leq \kappa_{n}(t)$. By the pathwise uniqueness, it holds $X^{n-1}_{\tau_{n-1}}=X^n_{\tau_{n-1}}$ for all $n\in\N$ such that $X$ is continuously well-defined and we must only show that it cannot explode, i.e.~that $\kappa_n\to T$ a.s. By the Garsia--Rodemich--Rumsey inequality (see \cite[Lemma~1.1]{Garsia1970}), Markov's inequality and Lemma~\ref{lem:help_regularity}, we obtain for any $\alpha\in(0,\gamma)$ and $p>2$ chosen such that $\alpha p>1$ that
  \begin{align*}
    \mathbb{P}\big( \sup\limits_{t\in[0,T]}|X^n_t-X_0^n|>n \big) &\leq \mathbb{P}\bigg( \sup\limits_{t\in[0,T]}\Big( C_{\alpha,p}t^{\alpha-\frac{1}{p}}\Big( \int_0^T \int_0^T \frac{|X_s-X_u|^p}{|s-u|^{\alpha p+1}} \dd u\dd s\Big)^{\frac{1}{p}} \Big)> n \bigg)\\
    &\leq n^{-p} \mathbb{E}\bigg[ C_{\alpha,p,T}\Big( \int_0^T \int_0^T \frac{|X_s-X_u|^p}{|s-u|^{\alpha p+1}} \dd u\dd s\Big) \bigg]\\
    &\leq C_{\alpha,p,T,\mu,\sigma,\epsilon} n^{-p},
  \end{align*}
  which tends to $0$ sufficiently fast such that the Borel--Cantelli lemma (see \cite[Theorem~2.7]{Klenke2014}) implies $\kappa_n\to T$ a.s.
\end{proof}

The rest of the paper is largely devoted to prove Theorem~\ref{thm:main result}. However, we will formulate and prove the partial findings under weaker assumptions if possible without additional effort.

\section{Properties of a solution}\label{sec:regularity}

In this section we establish some properties of solutions to stochastic Volterra equations. We start by the regularity and integrability of $L^p$-solutions, which requires only the linear growth condition of the coefficients and allows for singular kernels in the SVE~\eqref{eq:SVE}.

\begin{lemma}\label{lem:help_regularity}
  Suppose Assumption~\ref{ass:coefficients}~(i) and let $K_{\mu},K_{\sigma}\colon \Delta_T\to \R$ be measurable functions such that, for some $\epsilon>0$ and $L>0$,
  \begin{align}\label{eq:regularity kernels}
  \begin{split}
    &\int_0^t |K_{\mu}(s,t')-K_{\mu}(s,t)|^{1+\epsilon}\dd s + \int_t^{t'} |K_{\mu}(s,t')|^{1+\epsilon}\dd s \leq L|t'-t|^{\gamma(1+\epsilon)},\\
    &\int_0^t |K_{\sigma}(s,t')-K_{\sigma}(s,t)|^{2+\epsilon}\dd s  + \int_t^{t'} |K_{\sigma}(s,t')|^{2+\epsilon}\dd s  \leq L|t'-t|^{\gamma(2+\epsilon)},
  \end{split}
  \end{align}
  for all $(t,t^\prime)\in \Delta_T$, and \eqref{eq:bound_qtilde} holds. Furthermore, let $x_0\colon[0,T]\to \R$ be $\beta$-H{\"o}lder continuous for every $\beta \in (0,\gamma)$ for some $\gamma\in (0,\frac{1}{2}]$ and let $(X_t)_{t\in [0,T]}$ be a $L^p$-solution of the SVE~\eqref{eq:SVE} for some $p>\max\lbrace \frac{1}{\gamma},1+\frac{2}{\epsilon} \rbrace$. Then, for any $\beta \in (0,\gamma)$, there is a constant $C_{x_0,p,L,T,\mu,\sigma,\epsilon}>0$ such that
  \begin{equation*}
    \mathbb{E}[|X_{t'}-X_t|^p]\leq C_{x_0,p,L,T,\mu,\sigma,\epsilon}|t'-t|^{\beta p},
  \end{equation*}
  holds for all $t, t'\in [0,T]$. Consequently, $(X_t)_{t\in [0,T]}$ is $\beta$-H{\"o}lder continuous for any $\beta\in (0, \gamma -\frac{1}{p})$.
\end{lemma}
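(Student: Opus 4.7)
The plan is to bound $\E[|X_{t'}-X_t|^p]$ by $C|t'-t|^{\gamma p}$ for all $(t,t')\in\Delta_T$, from which the claimed estimate and the H\"older continuity of sample paths will follow at once. Starting from~\eqref{eq:SVE}, I would first split
\begin{align*}
X_{t'} - X_t &= \big[x_0(t') - x_0(t)\big] \\
&\quad + \int_0^t [K_\mu(s,t') - K_\mu(s,t)] \mu(s,X_s)\dd s + \int_t^{t'} K_\mu(s,t') \mu(s,X_s)\dd s \\
&\quad + \int_0^t [K_\sigma(s,t') - K_\sigma(s,t)] \sigma(s,X_s)\dd B_s + \int_t^{t'} K_\sigma(s,t')\sigma(s,X_s)\dd B_s,
\end{align*}
bound the five pieces separately via $|a_1+\cdots+a_5|^p \leq 5^{p-1}\sum_{i} |a_i|^p$, and combine them at the end. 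The initial-condition piece is controlled directly by the $\beta$-H\"older regularity of $x_0$.

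For the two drift pieces I would apply H\"older's inequality with conjugate exponents $1+\epsilon$ and $(1+\epsilon)/\epsilon$ and raise to the $p$-th power, obtaining for the first integral
\begin{equation*}
\Big|\int_0^t [K_\mu(s,t') - K_\mu(s,t)] \mu(s,X_s)\dd s\Big|^p \leq \Big(\int_0^t |K_\mu(s,t') - K_\mu(s,t)|^{1+\epsilon}\dd s\Big)^{\frac{p}{1+\epsilon}} \Big(\int_0^t |\mu(s,X_s)|^{\frac{1+\epsilon}{\epsilon}}\dd s\Big)^{\frac{p\epsilon}{1+\epsilon}}.
\end{equation*}
The first factor is bounded by $L^{p/(1+\epsilon)}|t'-t|^{\gamma p}$ thanks to~\eqref{eq:regularity kernels}. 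The assumption $p>1+2/\epsilon$ forces $r:=p\epsilon/(1+\epsilon)>1$, so Jensen's inequality $(\int_0^t f\dd s)^r \leq t^{r-1}\int_0^t f^r\dd s$ applied with $f = |\mu(s,X_s)|^{(1+\epsilon)/\epsilon}$ collapses the second factor to $t^{r-1}\int_0^t |\mu(s,X_s)|^p\dd s$; taking expectation, together with the linear growth of $\mu$ and the integrability furnished by~\eqref{eq:bound_qtilde}, produces a finite constant. The piece on $[t,t']$ is handled identically, using the second half of the first line of~\eqref{eq:regularity kernels}.

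For the stochastic integrals I would first apply the Burkholder--Davis--Gundy inequality to pass to the quadratic variation,
\begin{equation*}
\E\Big|\int_0^t [K_\sigma(s,t') - K_\sigma(s,t)]\sigma(s,X_s)\dd B_s\Big|^p \leq C_p \E\Big[\Big(\int_0^t |K_\sigma(s,t') - K_\sigma(s,t)|^2 \sigma(s,X_s)^2\dd s\Big)^{p/2}\Big],
\end{equation*}
and then split the integrand inside the $p/2$-power via H\"older with conjugates $(2+\epsilon)/2$ and $(2+\epsilon)/\epsilon$. The kernel factor produces $L^{p/(2+\epsilon)}|t'-t|^{\gamma p}$ by~\eqref{eq:regularity kernels}, while the remaining expectation has the form $\E\big[(\int_0^t |\sigma(s,X_s)|^{2(2+\epsilon)/\epsilon}\dd s)^{r'}\big]$ with $r':=p\epsilon/(2(2+\epsilon))$. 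Depending on whether $r'\geq 1$ or $r'\in(0,1)$, I would apply Jensen in its convex or concave form to reduce, respectively, to $t^{r'-1}\int_0^t \E[|\sigma(s,X_s)|^p]\dd s$ or $(\int_0^t \E[|\sigma(s,X_s)|^{2(2+\epsilon)/\epsilon}]\dd s)^{r'}$; in both cases the linear growth of $\sigma$ combined with the moment bound~\eqref{eq:bound_qtilde} yields a finite constant. The contribution on $[t,t']$ is completely analogous.

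Summing the five estimates gives $\E[|X_{t'}-X_t|^p]\leq C|t'-t|^{\gamma p}$ on $\Delta_T$, which, via $|t'-t|\leq T$, implies the stated inequality $\E[|X_{t'}-X_t|^p] \leq C|t'-t|^{\beta p}$ for every $\beta\in(0,\gamma)$. The H\"older-continuity conclusion then follows from Kolmogorov's continuity criterion: since $p>1/\gamma$, one may choose $\beta\in(0,\gamma)$ with $\beta p>1$, and the theorem supplies a modification whose sample paths are $\beta'$-H\"older continuous for every $\beta'<\beta-1/p$; letting $\beta\uparrow\gamma$ yields H\"older regularity of every order in $(0,\gamma-1/p)$. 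I expect the main technical subtlety to be the stochastic integral step when $r'=p\epsilon/(2(2+\epsilon))<1$, which can happen under the mere hypothesis $p>1+2/\epsilon$: the convex Jensen collapse fails there, so it becomes essential that~\eqref{eq:bound_qtilde} deliver moments of $X$ of order strictly larger than $p$, so that the concave form of Jensen still produces a bound uniform in $(t,t')\in\Delta_T$.
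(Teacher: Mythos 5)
Your decomposition into five pieces, the H\"older/Burkholder--Davis--Gundy estimates for the four integral terms, and the concluding Kolmogorov--Chentsov step all match the paper's own proof, and the drift terms are handled correctly: $p>1+2/\epsilon$ forces $r=p\epsilon/(1+\epsilon)>1$, so the convex Jensen collapse to $t^{r-1}\int_0^t\E[|\mu(s,X_s)|^p]\dd s$ is legitimate and needs only the assumed $L^p$-integrability of $X$. (A cosmetic point: that integrability comes from the definition of an $L^p$-solution, not from \eqref{eq:bound_qtilde}, which is a condition on the kernels alone and yields no moments of $X$; you misattribute this twice.)

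The genuine gap is exactly where you flag the ``main technical subtlety'': the stochastic term when $r'=p\epsilon/(2(2+\epsilon))<1$, i.e.\ when $1+2/\epsilon<p<2+4/\epsilon$, a regime the hypotheses allow (e.g.\ $\gamma=1/2$, $\epsilon=2$, $p=3$). There the concave form of Jensen's inequality requires $\E\int_0^T|\sigma(s,X_s)|^{2(2+\epsilon)/\epsilon}\dd s<\infty$, hence $X\in L^{2(2+\epsilon)/\epsilon}(\Omega\times[0,T])$ with $2(2+\epsilon)/\epsilon>p$ — strictly more integrability than the lemma assumes. Your proposed source for it does not exist: \eqref{eq:bound_qtilde} says nothing about $X$, and the result that does supply all moments of $X$ (Lemma~\ref{lem:bound}) is proved in the paper using the a.s.\ continuity of paths that the present lemma is supposed to establish, so invoking it here would be circular. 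Any H\"older split in $s$ that places the kernel increment into $L^{2+\epsilon}$ necessarily places $\sigma(\cdot,X_\cdot)$ into $L^{2(2+\epsilon)/\epsilon}$, and then both the convex and the concave Jensen routes require $p\ge 2+4/\epsilon$; so within your framework the estimate of $\E\bigl[\bigl(\int_0^t|K_\sigma(s,t')-K_\sigma(s,t)|^2\sigma(s,X_s)^2\dd s\bigr)^{p/2}\bigr]$ is not closed under the stated hypotheses, and the argument as written only goes through after one either raises the threshold on $p$ or first secures higher moments of $X$ by an independent argument.
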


\begin{proof}
  Let $p>2$ be given by the assumption. Since $x_0$ is $\beta$-H{\"o}lder continuous, we observe for $t,t^\prime\in [0,T]$ that
  \begin{equation*}
    \mathbb{E}[|X_{t'}-X_t|^p]
    \leq C_{p,x_0} |t'-t|^{\beta p} + C_p \mathbb{E}[|\tilde{X}_{t^{\prime}}-\tilde{X}_{t}|^p]
    \quad \text{with}\quad \tilde{X}_t:=X_{t}-x_0(t).
  \end{equation*}
  For $(t,t^\prime) \in \Delta_T$ we note that
  \begin{align*}
    |\tilde{X}_{t'}-\tilde{X}_t|^p
    =&\bigg|\int_0^{t'} K_\mu(s,t')\mu(s,X_s)\dd s+\int_0^{t'} K_\sigma(s,t')\sigma(s,X_s)\dd B_s\\
     &\quad-\int_0^t K_\mu(s,t)\mu(s,X_s)\dd s-\int_0^t K_\sigma(s,t)\sigma(s,X_s)\dd B_s\bigg|^p\\
     &\leq C_p\bigg(\bigg|\int_0^t \mu(s,X_s)( K_\mu(s,t')-K_\mu(s,t) )\dd s\bigg|^p + \bigg|\int_t^{t'} \mu(s,X_s) K_\mu(s,t')\dd s\bigg|^p\\
    &\quad+ \bigg|\int_0^t \sigma(s,X_s)( K_\sigma(s,t')-K_\sigma(s,t) )\dd B_s\bigg|^p + \bigg|\int_t^{t'} \sigma(s,X_s) K_\sigma(s,t')\dd B_s\bigg|^p\bigg)\\
    &=:C_p(A+B+C+D).
  \end{align*}
  We shall bound the expectation of the terms $A$-$D$ in the following. For $A$, we use H{\"o}lder's inequality, the linear growth of $\mu$ (Assumption~\ref{ass:coefficients}~(i)), \eqref{eq:regularity kernels} and that $X\in L^{\frac{1+\epsilon}{\epsilon}}(\Omega\times[0,T])$ since $\frac{1+\epsilon}{\epsilon}< p$ to obtain
  \begin{align*}
    \E[A]
    &\leq \E\bigg[\Big| \int_0^t|\mu(s,X_s)|^{\frac{1+\epsilon}{\epsilon}} \dd s\Big|^{\frac{p\epsilon}{1+\epsilon}}  \bigg] \bigg(\int_0^t\big|K_\mu(s,t')-K_\mu(s,t) \big|^{1+\epsilon}\dd s\bigg)^{\frac{p}{1+\epsilon}}\\
    &\leq C_{p,L,\mu,T,\epsilon} \bigg(\int_0^t\big|K_\mu(s,t')-K_\mu(s,t) \big|^{1+\epsilon}\dd s\bigg)^{\frac{p}{1+\epsilon}}\\
    &\leq C_{x_0,p,L,T,\mu,\sigma,\epsilon} |t'-t|^{\gamma p}.
  \end{align*}
  Note that the second inequality follows either with Jensen's inequality, if $\frac{p\epsilon}{1+\epsilon}\leq 1$, or else with H{\"o}lder's inequality and Fubini's theorem. Applying the analog estimates to $B$ gives
  \begin{equation*}
    \E[B] \leq \E\bigg[\Big|\int_t^{t^\prime}|\mu(s,X_s)|^{\frac{1+\epsilon}{\epsilon}}\dd s\Big|^{\frac{p\epsilon}{1+\epsilon}} \bigg] \bigg( \int_t^{t^\prime} \big|K_\mu(s,t') \big|^{1+\epsilon}\dd s\bigg)^{\frac{p}{1+\epsilon}}
    \leq C_{x_0,p,L,T,\mu,\sigma,\epsilon}|t'-t|^{\gamma p}.
  \end{equation*}
  For term $C$, relying on the Burkholder--Davis--Gundy inequality, H{\"o}lder's inequality, using the linear growth of $\sigma$ (Assumption~\ref{ass:coefficients}~(i)), $X\in L^{\frac{2+\epsilon}{\epsilon}}(\Omega\times[0,T])$ and \eqref{eq:regularity kernels}, we get
  \begin{align*}
    \E[C]
    &\leq \E \bigg[\bigg( \int_0^t \big| \sigma(s,X_s)\big( K_\sigma(s,t')-K_\sigma(s,t) \big) \big|^2\dd s\bigg)^{\frac{p}{2}} \bigg]\\
        &\leq \E\bigg[\Big| \int_0^t|\sigma(s,X_s)|^{\frac{2+\epsilon}{\epsilon}} \dd s\Big|^{\frac{p\epsilon}{4+2\epsilon}}  \bigg] \bigg(\int_0^t\big|K_\sigma(s,t')-K_\sigma(s,t) \big|^{2+\epsilon}\dd s\bigg)^{\frac{p}{2+\epsilon}}\\
        &\leq C_{p,L,\sigma,T,\epsilon} \bigg(\int_0^t\big|K_\sigma(s,t')-K_\sigma(s,t) \big|^{2+\epsilon}\dd s\bigg)^{\frac{p}{2+\epsilon}}\\
    &\leq  C_{x_0,p,L,T,\mu,\sigma,\epsilon} |t'-t|^{\gamma p}.
  \end{align*}
  Applying \eqref{eq:regularity kernels} and analog estimates to term~$D$ reveals
  \begin{equation*}
    \E[D]
    \leq C_{x_0,p,L,T,\mu,\sigma,\epsilon} \bigg(\int_t^{t'} K_\sigma(s,t')^{2+\epsilon}\dd s\bigg)^{\frac{p}{2+\epsilon}}
    \leq C_{x_0,p,L,T,\mu,\sigma,\epsilon} |t'-t|^{\gamma p}.
  \end{equation*}
  Hence, with the above estimates we arrive at
  \begin{equation*}
    \mathbb{E}[|X_{t'}-X_t|^p]
    \leq C_{p,x_0} |t'-t|^{\beta p} + C_{x_0,p,L,T,\mu,\sigma} |t'-t|^{\gamma p}
    \leq C_{x_0,p,L,T,\mu,\sigma,\epsilon} |t'-t|^{\beta p},
  \end{equation*}
  as $\beta < \gamma$. Hence, by Kolmogorov--Chentsov's theorem (see e.g. \cite[Theorem~21.6]{Klenke2014}) and sending $\beta\to\gamma$, there exists a modification of $(X_t)_{t\in [0,T]}$ which is $\delta^{\prime}$-H{\"o}lder continuous for $\delta^{\prime}\in (0,\gamma -1/p)$.
\end{proof}

\begin{remark}\label{rem_Kolmogorov}
  Suppose that the kernels $K_\mu$ and $K_\sigma$ fulfill Assumption~\ref{ass:kernels}. In this case it follows from Kolmogorov's continuity criterion and the estimates in the proof of Lemma~\ref{lem:help_regularity}, that, for every progressively measurable stochastic process $u\in L^p([0,T]\times\Omega)$ for some $p>\max\lbrace\frac{1}{\gamma},1+\frac{2}{\epsilon}\rbrace$, the process $(M_t^u)_{t\in[0,T]}$, defined by $M_t^u:=\int_0^t K_\mu(s,t)u_s\dd s+\int_0^t K_\sigma(s,t)u_s\dd B_s$, has $\mathbb{P}$-a.s. $\beta$-H{\"o}lder-continuous paths for every $\beta\in(0,\gamma-\frac{1}{p})$.
\end{remark}

\begin{remark}
  Note that the constant $C_{x_0,p,L,T,\mu,\sigma,\epsilon}$ in Lemma~\ref{lem:help_regularity} depends on $\mu$ and $\sigma$ only through the constant appearing in the linear growth condition (Assumption~\ref{ass:coefficients}~(i)).
\end{remark}

The integrability of solutions to the SVE~\eqref{eq:SVE} is the content of the next lemma.

\begin{lemma}\label{lem:bound}
  Suppose Assumption~\ref{ass:coefficients}~(i) and that $K_{\mu},K_{\sigma}\colon \Delta_T\to \R$ are measurable functions such that, for some $\epsilon>0$ and $L>0$,
  \begin{equation}\label{eq:bound_qtilde}
    \int_0^t |K_\mu(s,t)|^{1+\epsilon}\dd s+\int_0^t |K_\sigma(s,t)|^{2+\epsilon}\dd s\leq L,\quad t\in [0,T].
  \end{equation}
  Let $(X_t)_{t\in [0,T]}$ be a $L^p$-solution to the SVE~\eqref{eq:SVE} for some $p>\max\lbrace 2,1+\frac{2}{\epsilon} \rbrace$. Then,
  \begin{equation*}
    \sup\limits_{t\in[0,T]} \E[|X_t|^q]\leq C_{q,L,T,\mu,\sigma} \bigg(1+\sup\limits_{t\in [0,T]}|x_0(t)|^q \bigg),
  \end{equation*}
 holds for any $q\geq 1$, where the constant $ C_{q,L,T,\mu,\sigma}$ depends only on $q$, $L$, $T$ and the growth constants of $\mu$ and $\sigma$.
\end{lemma}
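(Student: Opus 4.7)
The plan is to derive an integral inequality
\begin{equation*}
  f(t) := \E[|X_t|^q] \leq C_{q,L,T,\mu,\sigma}\Bigl(1 + \sup_{s\in [0,T]}|x_0(s)|^q\Bigr) + C_{q,L,T,\mu,\sigma}\int_0^t f(s)\dd s
\end{equation*}
and conclude via Gronwall's lemma. Starting from the elementary bound $(|a|+|b|+|c|)^q \leq 3^{q-1}(|a|^q+|b|^q+|c|^q)$ applied to the right-hand side of~\eqref{eq:SVE}, the task reduces to estimating the $q$-th moments of the drift integral $I_\mu(t) := \int_0^t K_\mu(s,t)\mu(s,X_s)\dd s$ and the stochastic Volterra integral $I_\sigma(t) := \int_0^t K_\sigma(s,t)\sigma(s,X_s)\dd B_s$ by $C\int_0^t (1+\E[|X_s|^q])\dd s$.

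For the drift term, I would use the three-exponent H{\"o}lder inequality with conjugate exponents $(1+\epsilon,\beta,q)$ satisfying $\tfrac{1}{1+\epsilon}+\tfrac{1}{\beta}+\tfrac{1}{q}=1$: the first factor absorbs the kernel norm via~\eqref{eq:bound_qtilde} (yielding $L^{1/(1+\epsilon)}$), the second produces a harmless $T^{1/\beta}$, and the third leaves $(\int_0^t |\mu(s,X_s)|^q\dd s)^{1/q}$. Raising to the $q$-th power, taking expectation, and using the linear growth of $\mu$ from Assumption~\ref{ass:coefficients}~(i) gives $\E[|I_\mu(t)|^q] \leq C_{q,L,T,\mu}\int_0^t (1+\E[|X_s|^q])\dd s$, provided $q \geq (1+\epsilon)/\epsilon$; for smaller $q$, Jensen's inequality reduces to this case. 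For the stochastic integral, I would first apply the Burkholder--Davis--Gundy inequality to obtain $\E[|I_\sigma(t)|^q] \leq C_q\E[(\int_0^t K_\sigma(s,t)^2\sigma(s,X_s)^2\dd s)^{q/2}]$, then an analogous three-exponent H{\"o}lder with kernel exponent $(2+\epsilon)/2$ and integrand exponent $q/2$, together with linear growth of $\sigma$, to arrive at $C_{q,L,T,\sigma}\int_0^t (1+\E[|X_s|^q])\dd s$.

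Combining these estimates with $|x_0(t)|^q$ produces the advertised Gronwall inequality, which closes the argument once $f \in L^1([0,T])$ is known. For $q \leq p$ this is immediate since $X \in L^p(\Omega\times [0,T])$ and Jensen gives $\E[|X_s|^q] \leq 1 + \E[|X_s|^p]$. For $q > p$, I would localize via $\tau_n := \inf\{t\in[0,T]:|X_t|>n\}\wedge T$ (well-defined by path-continuity of~$X$ from Lemma~\ref{lem:help_regularity}): the quantity $\E[|X_{t\wedge\tau_n}|^q]$ is a priori finite since $|X_{t\wedge\tau_n}|\leq n$, the same estimates applied on $[0,\tau_n]$ yield a Gronwall bound at each level $n$ depending only on the growth constants of $\mu$ and $\sigma$, and Fatou's lemma permits the passage $n\to\infty$.

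The most delicate step is the balancing of the three H{\"o}lder exponents so that the kernel norm from~\eqref{eq:bound_qtilde} can be absorbed \emph{and} only $|X_s|^q$, rather than some higher power, appears on the right-hand side; this is precisely what the hypothesis $p > \max\{2,1+2/\epsilon\}$ is designed to permit. The localization at $\tau_n$ for $q > p$ is a secondary subtlety in the Volterra setting because stopping evaluates the kernels at a random time $t\wedge\tau_n$, but the a priori bound $|X_s|\leq n$ for $s\leq\tau_n$ ultimately tames this.
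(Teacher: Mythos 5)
Your proposal is correct and follows essentially the same route as the paper: H\"older (with the kernel exponent matched to~\eqref{eq:bound_qtilde}) plus Burkholder--Davis--Gundy and linear growth to get the integral inequality, then localization, Gr\"onwall, and a limit in the truncation level. The one step to tighten is the localization you flag yourself: rather than working with $\E[|X_{t\wedge\tau_n}|^q]$ (which really does put the kernel at a random time and blocks a direct BDG application), do as the paper does and bound $\E[|X_t|^q\,\1_{\{t\leq\tau_n\}}]$, pushing the indicator inside the integrals via $\1_{\{t\leq\tau_n\}}\leq\1_{\{s\leq\tau_n\}}$ for $s\leq t$, so the kernels stay deterministic and the localized quantity is bounded, as Gr\"onwall requires.
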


\begin{proof}
  Let us introduce the hitting times
  \begin{equation}\label{def:hittingtimes}
    \tau_k := \inf\lbrace t\in [0,T]\colon |X_t|\geq k \rbrace\wedge T,\quad \text{for }k\in \N.
  \end{equation}
  Note that $\tau_k\to T$ a.s. as $k\to\infty$, since the paths of the solution $X$ are $\mathbb{P}$-a.s. continuous by Lemma~\ref{lem:help_regularity}. Since the underlying filtered probability space satisfies the usual conditions, by the D{\'e}but theorem (see \cite[Chapter~I, (4.15)~Theorem]{Revuz1999}), the hitting times $(\tau_k)_{k\in\N}$ are stopping times.

  First, let $q>2$ be big enough such that $q':=\frac{q}{q-1}\leq 1+\epsilon$ and $\tilde{q}:=\frac{q}{q-2}\leq 1+\epsilon/2$. Using H{\"o}lder's inequality, the Burkholder--Davis--Gundy inequality, and the linear growth condition (Assumption~\ref{ass:coefficients}~(i)), we get
  \begin{align}\label{ineq:derivation_momentbound}
    &\E [|X_{t}|^q \1_{\lbrace t\leq\tau_k \rbrace}]\notag\\
    &\quad=\E\bigg[ \bigg| x_0({t})+\int_0^{t} K_\mu(s,{t})\mu(s,X_s)\dd s +\int_0^{t} K_\sigma(s,{t})\sigma(s,X_s)\dd B_s \bigg|^q \1_{\lbrace t\leq\tau_k \rbrace} \bigg]\notag\\
    &\quad = \E\bigg[ \bigg| x_0({t})\,\1_{\lbrace t\leq\tau_k \rbrace}+\int_0^{t} K_\mu(s,{t})\mu(s,X_s)\dd s\,\1_{\lbrace t\leq\tau_k \rbrace} +\int_0^{t} K_\sigma(s,{t})\sigma(s,X_s) \dd B_s \,\1_{\lbrace t\leq\tau_k \rbrace}\bigg|^q  \bigg]\notag\\    
    &\quad\leq C_q\E\bigg[ \big| x_0({t})\big|^q+\Big|\int_0^{t} K_\mu(s,{t})\mu(s,X_s)\,\1_{\lbrace s\leq\tau_k \rbrace}\dd s\Big|^q +\Big|\int_0^{t} K_\sigma(s,{t})\sigma(s,X_s)\,\1_{\lbrace s\leq\tau_k \rbrace} \dd B_s \big|^q  \bigg]\notag\\
    &\quad\leq C_q\bigg(|x_0({t})|^q+\bigg( \int_0^{t} |K_\mu(s,{t})|^{q'}\dd s \bigg)^{\frac{q}{q'}}\int_0^{t} \E[|\mu(s,X_s)|^q \1_{\lbrace s\leq\tau_k \rbrace}]\dd s\notag\\
    &\quad\qquad\qquad +\E\bigg[\bigg(\int_0^{t} | K_\sigma(s,{t})\sigma(s,X_s) |^2 \1_{\lbrace s\leq\tau_k \rbrace} \dd s\bigg)^{\frac{q}{2}}\bigg]\bigg) \notag\\
    &\quad\leq C_q\bigg(|x_0({t})|^q+C_{q,\mu}\bigg( \int_0^{t} |K_\mu(s,{t})|^{q'}\dd s \bigg)^{\frac{q}{q'}}\int_0^{t} \E[1+|X_s|^q \1_{\lbrace s\leq\tau_k \rbrace}]\dd s \notag\\
    &\quad\qquad\qquad +C_{q,\sigma}\bigg( \int_0^{t} |K_\sigma(s,{t})|^{2\tilde{q}}\dd s \bigg)^{\frac{q}{2\tilde{q}}}\int_0^{t} \E[1+| X_s|^q \1_{\lbrace s\leq\tau_k \rbrace}] \dd s\bigg)
  \end{align}  
  for $t\in [0,T]$. Due to \eqref{eq:bound_qtilde} we arrive at
  \begin{equation*}
    \E [|X_{t}|^q \1_{\lbrace t\leq\tau_k \rbrace}]
    \leq C_{q,L,T,\mu,\sigma} \bigg(1+|x_0({t})|^q+\int_0^{t} \E[|X_{s}|^q \1_{\lbrace s\leq\tau_k \rbrace}]\dd s \bigg)
  \end{equation*}
  and, thus, as $t\mapsto \E[|X_{t}|^q \1_{\lbrace t\leq\tau_k \rbrace}]$ is bounded by $k$ on $[0,T]$, we can apply Gr{\"o}nwall's lemma (see e.g. \cite[Lemma~26.9]{Klenke2014}) to get
  \begin{equation*}
    \E [|X_{t}|^q \1_{\lbrace t\leq\tau_k \rbrace}]\leq C_{q,L,T,\mu,\sigma} \bigg(1+\sup\limits_{t\in [0,T]}|x_0(t)|^q \bigg), \quad t\in [0,T].
  \end{equation*}
  Sending $k\to\infty$ and taking the supremum over $[0,T]$ reveals the assertion. The orderedness of the $L^p$-spaces implies the statement also for $q_2\in [1,q)$.
\end{proof}

We conclude that the regularity of a solution can be improved.

\begin{corollary}\label{cor_regularity}
  Under the assumptions of Lemma~\ref{lem:help_regularity}, any $L^p$-solution to the SVE~\eqref{eq:SVE} for some $p>\max\lbrace \frac{1}{\gamma},1+\frac{2}{\epsilon} \rbrace$ is $\beta$-H{\"o}lder continuous for any $\beta\in(0,\gamma)$.
\end{corollary}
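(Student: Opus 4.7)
The plan is to bootstrap the conclusion of Lemma~\ref{lem:help_regularity} by combining it with the moment bound in Lemma~\ref{lem:bound}. The point is that Lemma~\ref{lem:help_regularity} already produces the moment estimate $\E[|X_{t'}-X_t|^p] \leq C |t'-t|^{\beta p}$ for every $\beta\in(0,\gamma)$ with the \emph{same} exponent $p$ that makes $X$ an $L^p$-solution; applying Kolmogorov--Chentsov directly then costs $1/p$ in the H\"older exponent. To recover the full range $(0,\gamma)$ one only needs to let this $p$ be arbitrarily large.

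First I would verify that the hypotheses of Lemma~\ref{lem:bound} are in force. The linear growth condition Assumption~\ref{ass:coefficients}~(i) is part of the standing assumptions of Lemma~\ref{lem:help_regularity}, and the integrability of the kernels required in \eqref{eq:bound_qtilde} is explicitly postulated there. Consequently Lemma~\ref{lem:bound} applies with any $q\geq 1$ and yields
\begin{equation*}
  \sup_{t\in[0,T]}\E[|X_t|^q] \leq C_{q,L,T,\mu,\sigma}\bigg(1+\sup_{t\in[0,T]}|x_0(t)|^q\bigg)<\infty.
\end{equation*}
In particular $X\in L^{p'}(\Omega\times[0,T])$ for every $p'\geq 1$, so $(X_t)_{t\in[0,T]}$ is an $L^{p'}$-solution of \eqref{eq:SVE} for every $p'\geq 1$.

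With this upgraded integrability in hand, I would reapply Lemma~\ref{lem:help_regularity} with an arbitrary $p'>\max\{\tfrac{1}{\gamma},1+\tfrac{2}{\epsilon}\}$ in place of $p$. This gives, for every $\beta\in(0,\gamma)$ and every such $p'$,
\begin{equation*}
  \E[|X_{t'}-X_t|^{p'}] \leq C_{x_0,p',L,T,\mu,\sigma,\epsilon}\,|t'-t|^{\beta p'},\qquad t,t'\in[0,T].
\end{equation*}
The Kolmogorov--Chentsov theorem then produces a modification of $X$ whose sample paths are $\mathbb{P}$-a.s.\ $\tilde\beta$-H\"older continuous for every $\tilde\beta\in(0,\beta-\tfrac{1}{p'})$. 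Given any target exponent $\beta_0\in(0,\gamma)$, one first picks $\beta\in(\beta_0,\gamma)$ and then $p'$ large enough that $\beta-\tfrac{1}{p'}>\beta_0$; this shows that the modification is $\beta_0$-H\"older continuous. Since the H\"older modifications are necessarily equal $\mathbb{P}$-a.s.\ to the original continuous process, this establishes the claim for every $\beta\in(0,\gamma)$.

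The argument is essentially routine once the moment bound Lemma~\ref{lem:bound} is available; the only minor point to be careful about is that no single choice of $p'$ reaches the endpoint $\gamma$, so one must argue pathwise by exhausting the interval $(0,\gamma)$ with a sequence $p'_n\to\infty$, which is immediate from the previous paragraph.
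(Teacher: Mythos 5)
Your proposal is correct and follows the same route as the paper: the paper's proof is exactly "apply Lemma~\ref{lem:bound} and Lemma~\ref{lem:help_regularity} with $q>2$ and then send $q\to\infty$", which is the bootstrap you spell out in more detail (upgrade the integrability via Lemma~\ref{lem:bound}, reapply Lemma~\ref{lem:help_regularity} with arbitrarily large exponent, and exhaust $(0,\gamma)$ via Kolmogorov--Chentsov).
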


\begin{proof}
  The statement follows by applying Lemma~\ref{lem:bound} and Lemma~\ref{lem:help_regularity} with $q>2$ and then sending $q\to\infty$.
\end{proof}

Assuming sufficient regularity of the kernels $K_\mu,K_{\sigma}$, every solution to the stochastic Volterra equation~\eqref{eq:SVE} is essentially a semimartingale as first observed in \cite[Theorem~3.3]{Protter1985}.

\begin{lemma}\label{lem:semimartingale property}
  Let $K_\mu,K_{\sigma}\colon\Delta_T \to \R$ be measurable functions. Suppose $K_\mu(s,\cdot)$ is absolutely continuous for every $s\in [0,T]$ with $\partial_2 K_\mu \in L^1(\Delta_T)$, $K_{\sigma}(s,\cdot)$ is absolutely continuous for every $s\in [0,T]$ with $\partial_2 K_\sigma\in L^{2}(\Delta_t)$, and Assumption~\ref{ass:coefficients}~(i) holds. Let $(X_t)_{t\in [0,T]}$ be a solution to the SVE~\eqref{eq:SVE} such that $\E[|X_t|^2]\leq C$ for all $t\in [0,T]$ and some constant~$C$. Then, $(X_t-x_0(t))_{t\in [0,T]}$ is a semimartingale with decomposition $X_t-x_0(t)=M_t+A_t$ where
  \begin{align*}
    &M_t:= \int_0^t K_\sigma(s,s)\sigma(s,X_s)\dd B_s\quad \text{and}\\
    &A_t:= \int_0^t K_\mu(s,s)\mu(s,X_s) \dd s \\
    &\qquad +  \int_0^t \bigg(\int_0^s \partial_2 K_\mu(u,s)\mu(u,X_u) \dd u + \int_0^s \partial_2 K_\sigma(u,s)\sigma(u,X_u)\dd B_u\bigg) \dd s
  \end{align*}
  for $t\in [0,T]$.
\end{lemma}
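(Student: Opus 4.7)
The plan is to use the absolute continuity of $K_\mu(s,\cdot)$ and $K_\sigma(s,\cdot)$ to rewrite each kernel as its value on the diagonal plus a time integral of the derivative, and then swap the order of integration via (classical and stochastic) Fubini. More precisely, for every $(s,t)\in\Delta_T$ I would write
\[
K_\mu(s,t) = K_\mu(s,s) + \int_s^t \partial_2 K_\mu(s,u)\dd u, \qquad K_\sigma(s,t) = K_\sigma(s,s) + \int_s^t \partial_2 K_\sigma(s,u)\dd u,
\]
insert these identities into the two integrals in \eqref{eq:SVE}, and split off the diagonal terms, which already give the martingale $M_t$ and the leading drift contribution $\int_0^t K_\mu(s,s)\mu(s,X_s)\dd s$.

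The remaining two terms are
\[
\int_0^t \mu(s,X_s) \int_s^t \partial_2 K_\mu(s,u)\dd u\dd s \quad\text{and}\quad \int_0^t \sigma(s,X_s) \int_s^t \partial_2 K_\sigma(s,u)\dd u\dd B_s.
\]
For the first I would apply classical Fubini on $\{(s,u) : 0\leq s\leq u\leq t\}$; integrability is guaranteed since $\partial_2 K_\mu\in L^1(\Delta_T)$ and the linear growth of~$\mu$ together with $\E[|X_s|^2]\leq C$ yields $\E[|\mu(s,X_s)|]\leq C$, so the double integral of the absolute value is finite after taking expectation. For the second I would invoke a stochastic Fubini theorem (e.g. Veraar's version); the hypothesis to verify is
\[
\int_0^t\!\!\int_0^u (\partial_2 K_\sigma(s,u))^2\,\E[\sigma(s,X_s)^2]\dd s\dd u <\infty,
\]
which follows from $\partial_2 K_\sigma\in L^2(\Delta_T)$, the linear growth of~$\sigma$, and the $L^2$-bound on $X$. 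After swapping, the term becomes
\[
\int_0^t \int_0^u \partial_2 K_\sigma(s,u)\sigma(s,X_s)\dd B_s \dd u,
\]
which matches the second drift contribution in $A_t$.

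The main obstacle is the rigorous application of stochastic Fubini: one needs to check the joint measurability of the integrand $(s,u,\omega)\mapsto \mathbf{1}_{\{s\leq u\}}\partial_2 K_\sigma(s,u)\sigma(s,X_s(\omega))$ in the progressive sense and the square-integrability condition above. Once this is done, combining the diagonal term $\int_0^t K_\sigma(s,s)\sigma(s,X_s)\dd B_s$, which is a square-integrable martingale by the linear growth of $\sigma$ and $\E[|X_s|^2]\leq C$, with the two drift pieces yields exactly the claimed decomposition $X_t-x_0(t)=M_t+A_t$. Finally, $A_t$ is absolutely continuous in $t$ (its integrand against $\dd s$ is finite a.s. by the integrability arguments above), hence of bounded variation, so $X-x_0$ is indeed a semimartingale.
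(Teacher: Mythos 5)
Your proposal is correct and follows essentially the same route as the paper's proof: expand each kernel as its diagonal value plus $\int_s^t \partial_2 K(s,u)\dd u$ via absolute continuity, verify the integrability conditions from $\partial_2 K_\mu\in L^1(\Delta_T)$, $\partial_2 K_\sigma\in L^2(\Delta_T)$, the linear growth of the coefficients and the $L^2$-bound on $X$, and then interchange the order of integration by the classical and the stochastic Fubini theorem (the paper also cites Veraar's version). The only cosmetic difference is that the paper abbreviates the computation by introducing the processes $Y_t=\int_0^t\sigma(s,X_s)\dd B_s$ and $Z_t=\int_0^t\mu(s,X_s)\dd s$.
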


\begin{proof}
  Setting
  \begin{equation*}
    Y_t:=\int_0^t \sigma(s,X_s)\dd B_s
     \quad\text{and}\quad
    Z_t:=\int_0^t \mu(s,X_s) \dd s,\quad\text{for } t\in [0,T],
  \end{equation*}
  and using the absolute continuity of $K_{\mu},K_{\sigma}$, we get
  \begin{align*}
    X_t
    &=\int_0^t K_\mu(s,s)\dd Z_s+ \int_0^t \Big(\int_s^t \partial_2 K_\mu(s,u)\dd u\Big)\dd Z_s\\
    &\qquad + \int_0^t \left( \int_s^t \partial_2 K_\sigma(s,u)\dd u\right) \dd Y_s + \int_0^t K_\sigma(s,s)\dd Y_s.
  \end{align*}
  Since
  \begin{equation*}
    \E \bigg[ \int_{\Delta_T}|\partial_2 K_\mu(s,u)\mu(s,X_s)|\dd s\dd u  \bigg]
    + \E \bigg[ \int_{\Delta_T} (\partial_2 K_\sigma(s,u)\sigma(s,X_s))^2\dd s\dd u  \bigg]
    <\infty
  \end{equation*}
  due to $\E[|X_t|^2]\leq C$ for all $t\in [0,T]$, $\partial_2 K_\mu\in L^1(\Delta_T)$ and $\partial_2 K_\sigma \in L^2(\Delta_T)$, we can apply the classical and the stochastic Fubini theorem (see e.g. \cite[Theorem~2.2]{Veraar2012}) to get
  \begin{align*}
    X_t
    &=\int_0^t K_\mu(s,s)\dd Z_s+ \int_0^t \Big(\int_0^u \partial_2 K_\mu(s,u)\dd Z_s\Big)\dd u\\
    &\qquad+ \int_0^t \left( \int_0^u \partial_2 K_\sigma(s,u)\dd Y_s\right) \dd u + \int_0^t K_\sigma(s,s)\dd Y_s,
  \end{align*}
  which completes the proof.
\end{proof}

Applying the previous lemmas to the setting of Theorem~\ref{thm:main result} leads to the following corollary.

\begin{corollary}\label{cor:regularity}
  Suppose Assumptions~\ref{ass:kernels} and~\ref{ass:coefficients}. Let $(X_t)_{t\in [0,T]}$ be a $L^p$-solution to the SVE~\eqref{eq:SVE} for some $p>\max\lbrace \frac{1}{\gamma},1+\frac{2}{\epsilon} \rbrace$. Then, $(X_t)_{t\in [0,T]}$ satisfies $\sup_{t\in [0,T]}\E [|X_t|^q]<\infty$ for every $q\in [1,\infty)$, $(X_t)_{t\in [0,T]}$ is $\beta$-H{\"o}lder continuous for every $\beta \in (0,\gamma)$ for $\gamma \in (0,1/2]$ given in Assumption~\ref{ass:kernels}, and $(X_t-x_0(t))_{t\in [0,T]}$ is a semimartingale.
\end{corollary}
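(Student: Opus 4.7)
The strategy is to verify that the hypotheses of Lemma~\ref{lem:bound}, Corollary~\ref{cor_regularity}, and Lemma~\ref{lem:semimartingale property} are all satisfied under Assumptions~\ref{ass:kernels} and~\ref{ass:coefficients}, and then to invoke each of these three results in turn, one for each of the three assertions.

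First I would check the uniform boundedness condition~\eqref{eq:bound_qtilde} required by Lemma~\ref{lem:bound}. Since $K_\mu$ and $K_\sigma$ are continuous on the compact set $\Delta_T$ by Assumption~\ref{ass:kernels}, both kernels are bounded, and~\eqref{eq:bound_qtilde} is immediate. Applying Lemma~\ref{lem:bound} with any $q>\max\{2,1+2/\epsilon\}$ yields $\sup_{t\in[0,T]}\E[|X_t|^q]<\infty$, and the ordering of $L^q$-spaces on the finite measure space $(\Omega\times[0,T],\,\P\otimes\mathrm{d}t)$ extends this to all $q\in[1,\infty)$.

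Second, I would verify the Hölder-type kernel bounds~\eqref{eq:regularity kernels} required by Corollary~\ref{cor_regularity} (which itself appeals to Lemma~\ref{lem:help_regularity}). The estimate on $\int_0^t|K_\sigma(s,t')-K_\sigma(s,t)|^{2+\epsilon}\dd s$ is exactly the one imposed in Assumption~\ref{ass:kernels}(ii), while the tail contribution $\int_t^{t'}|K_\sigma(s,t')|^{2+\epsilon}\dd s$ is dominated by $\|K_\sigma\|_\infty^{2+\epsilon}|t'-t|$; the two $K_\mu$ bounds follow from $|K_\mu(s,t')-K_\mu(s,t)|\leq\|\partial_2K_\mu\|_\infty|t'-t|$ together with the boundedness of $K_\mu$. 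The surplus powers of $|t'-t|$ that appear (beyond the target $\gamma(1+\epsilon)$ or $\gamma(2+\epsilon)$) are absorbed into multiplicative constants using $|t'-t|\leq T$ and $\gamma\leq 1/2$. Corollary~\ref{cor_regularity} then delivers $\beta$-Hölder continuity for every $\beta\in(0,\gamma)$.

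Finally, for the semimartingale property I would invoke Lemma~\ref{lem:semimartingale property}. Its kernel hypotheses---absolute continuity of $K_\mu(s,\cdot),K_\sigma(s,\cdot)$ with $\partial_2K_\mu$ bounded (hence in $L^1(\Delta_T)$) and $\partial_2K_\sigma\in L^2(\Delta_T)$---are precisely what is listed in Assumption~\ref{ass:kernels}(i)--(ii), while the second moment bound $\sup_{t\in[0,T]}\E[|X_t|^2]<\infty$ is the $q=2$ instance already secured in the first step. The lemma then yields the decomposition $X_t-x_0(t)=M_t+A_t$ stated in the corollary. I do not anticipate a genuine obstacle; the only mild care needed lies in the bookkeeping of exponents in step two, where continuity-based ``too nice'' bounds of the form $C|t'-t|$ must be rewritten as $C_T|t'-t|^{\gamma(1+\epsilon)}$ or $C_T|t'-t|^{\gamma(2+\epsilon)}$ by pulling out appropriate powers of $T$.
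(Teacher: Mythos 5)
Your proposal is correct and follows exactly the paper's own route: continuity of the kernels on the compact set $\Delta_T$ yields \eqref{eq:bound_qtilde} and hence the moment bounds via Lemma~\ref{lem:bound}, Assumption~\ref{ass:kernels} yields \eqref{eq:regularity kernels} and hence the H{\"o}lder regularity via Corollary~\ref{cor_regularity} (i.e.\ Lemma~\ref{lem:help_regularity}), and Lemma~\ref{lem:semimartingale property} gives the semimartingale decomposition. One caveat on your exponent bookkeeping: for the tail integrals $\int_t^{t'}|K_\sigma(s,t')|^{2+\epsilon}\dd s$ the crude bound $\|K_\sigma\|_\infty^{2+\epsilon}|t'-t|$ carries the power $1$, which is a \emph{deficit}, not a surplus, relative to the target $\gamma(2+\epsilon)$ whenever $\gamma(2+\epsilon)>1$ (e.g.\ $\gamma=1/2$ with any $\epsilon>0$, where in fact $|K_\sigma(t,t)|\geq C$ forces the tail integral to be of exact order $|t'-t|$), so it cannot be absorbed into a constant there; the paper is equally terse on this point, and the repair is to track these tail terms separately in Lemma~\ref{lem:help_regularity}, where after raising to the power $p/(2+\epsilon)$ they still feed into Kolmogorov--Chentsov for $p$ large, rather than to force them into the form $L|t'-t|^{\gamma(2+\epsilon)}$.
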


\begin{proof}
  Note that the existence and boundedness of $\partial_2K_\mu$ from Assumption~\ref{ass:kernels}~(i) imply that
  \begin{align*}
    \int_0^s |K_{\mu}(u,t)-K_{\mu}(u,s)|^{1+\epsilon}\dd u &= \int_0^s \big| \int_s^t \partial_2 K_{\mu}(u,r)\dd r\big|^{1+\epsilon}\dd u\\
    & \leq C|t-s|^{\gamma}\notag
  \end{align*}
  holds for some $C>0$ and any $(s,t)\in \Delta_T$, using $\epsilon>0$ and $\gamma\in (0,1/2]$ from Assumption~\ref{ass:kernels}. Furthermore, the continuity of $K_{\mu}$ and $K_{\sigma}$ ensures that condition~\eqref{eq:bound_qtilde} holds and, thus, $\sup_{t\in [0,T]}\E [|X_t|^q]<\infty$ for every $q\in [1,\infty)$ by Lemma~\ref{lem:bound}. Moreover, since Assumption~\ref{ass:kernels} implies \eqref{eq:regularity kernels}, Corollary~\ref{cor_regularity} states the claimed $\beta$-H{\"o}lder continuity. The semimartingale property follows by Lemma~\ref{lem:semimartingale property}.
\end{proof}

\section{Existence of a strong solution}\label{sec:existence}

This section is devoted to establish the existence of a strong solution to the SVE~\eqref{eq:SVE}:

\begin{theorem}\label{thm:existence}
  Suppose Assumptions~\ref{ass:kernels} and~\ref{ass:coefficients}, and let $p>\max\lbrace \frac{1}{\gamma},1+\frac{2}{\epsilon} \rbrace$. Then, there exists a strong $L^p$-solution $(X_t)_{t\in [0,T]}$ to the SVE~\eqref{eq:SVE}.
\end{theorem}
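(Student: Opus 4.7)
The plan is to construct a strong $L^p$-solution as the limit of an Euler-type approximation scheme, following the strategy of Gy\"ongy--R\'asonyi for ordinary SDEs but adapted to the Volterra setting via the semimartingale decomposition of Lemma~\ref{lem:semimartingale property}. For $n\in\N$, with $\eta_n(s):=\max\{kT/2^n : kT/2^n\leq s\}$, I define $X^n$ inductively on the dyadic intervals by
\begin{equation*}
  X^n_t = x_0(t) + \int_0^t K_\mu(s,t)\mu(\eta_n(s),X^n_{\eta_n(s)})\dd s + \int_0^t K_\sigma(s,t)\sigma(\eta_n(s),X^n_{\eta_n(s)})\dd B_s,
\end{equation*}
which is adapted and $L^p$-integrable by construction, since at each $s$ the integrands involve $X^n$ only at the previous grid point.

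Since the linear-growth constants of $\mu,\sigma$ are preserved under the freezing $\eta_n$, Lemma~\ref{lem:bound} yields $\sup_{n}\sup_{t\in[0,T]}\E[|X^n_t|^q]<\infty$ for every $q\geq 1$, and Lemma~\ref{lem:help_regularity} provides a uniform-in-$n$ $\beta$-H\"older bound for every $\beta\in(0,\gamma)$. A consequence is $\E[|X^n_t - X^n_{\eta_n(t)}|^p]\to 0$ uniformly in~$t$, which will be used to transfer the $(1/2+\xi)$-H\"older property of $\sigma$ onto the frozen coefficients.

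The core step is to show that $(X^n)$ is Cauchy in $L^1(\Omega\times[0,T])$. Invoking Lemma~\ref{lem:semimartingale property}, I write $X^n_t-x_0(t)$ as a semimartingale with martingale part $\int_0^t K_\sigma(s,s)\sigma(\eta_n(s),X^n_{\eta_n(s)})\dd B_s$ and drift containing the auxiliary Volterra process $Y^n_t:=\int_0^t \partial_2 K_\sigma(u,t)\sigma(\eta_n(u),X^n_{\eta_n(u)})\dd B_u$; the process $Y^n$ itself decomposes analogously thanks to the absolute continuity of $\partial_2 K_\sigma(\cdot,t)$. Viewing $(X^n-X^m,Y^n-Y^m)$ as a coupled semimartingale system, I apply It\^o's formula to $\phi_k(X^n_t-X^m_t)$ (and to $\phi_k(Y^n_t-Y^m_t)$) on $[0,t\wedge\tau_K^{n,m}]$, with $\phi_k$ the Yamada--Watanabe mollifiers (satisfying $\phi_k\nearrow |\cdot|$, $|\phi_k'|\leq 1$, and $\phi_k''(u)\,|u|^{1+2\xi}\leq C/\log k$) and $\tau_K^{n,m}:=\inf\{t:|X^n_t|\vee|X^m_t|\geq K\}\wedge T$. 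The quadratic-variation term
\begin{equation*}
  \tfrac{1}{2}\,\E\bigl[\phi_k''(X^n_s-X^m_s)\, K_\sigma(s,s)^2\, |\sigma(\eta_n(s),X^n_{\eta_n(s)})-\sigma(\eta_m(s),X^m_{\eta_m(s)})|^2\bigr]
\end{equation*}
tends to zero as $k\to\infty$ by the $(1/2+\xi)$-H\"older property of $\sigma$ combined with the Y--W estimate, while the drift contributions are controlled using $|\phi_k'|\leq 1$, the Lipschitz continuity of $\mu$, and the singular bounds $|\partial_1 K_\sigma(s,t)|+|\partial_2 K_\sigma(s,s)|+\int_s^t|\partial_{21}K_\sigma(s,u)|\dd u\leq C(t-s)^{-\alpha}$ from Assumption~\ref{ass:kernels}~(ii). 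The resulting inequality has the form $f^{n,m}(t)\leq C\int_0^t (t-s)^{-\alpha}f^{n,m}(s)\dd s + r^{n,m}(t)$ with $r^{n,m}(t)\to 0$ as $n,m\to\infty$, to which a weakly singular Gr\"onwall lemma applies.

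The main obstacle is the mismatch between the Yamada--Watanabe localization and the Volterra structure: $\phi_k''$ concentrates near $X^n_s-X^m_s$, whereas the H\"older estimate for $\sigma$ produces $|X^n_{\eta_n(s)}-X^m_{\eta_m(s)}|^{1+2\xi}$ rather than $|X^n_s-X^m_s|^{1+2\xi}$. This is exactly where the uniform modulus $|X^n_s-X^n_{\eta_n(s)}|\to 0$ in $L^p$ is required: it absorbs the discrepancy into the vanishing remainder~$r^{n,m}$. Once the Cauchy property is secured on each $[0,\tau_K^{n,m}]$, sending $K\to\infty$ using the uniform $L^q$-moment bounds removes the localization, Kolmogorov--Chentsov together with the uniform H\"older estimate extracts an a.s. uniform limit $X$, and passage to the limit in the SVE~\eqref{eq:SVE} follows by dominated convergence for the Riemann--Stieltjes integral and It\^o's isometry for the stochastic integral, identifying $X$ as a strong $L^p$-solution.
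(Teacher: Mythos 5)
Your overall architecture coincides with the paper's: an Euler scheme, the semimartingale decomposition of Lemma~\ref{lem:semimartingale property}, a Yamada--Watanabe-type application of It\^o's formula to the difference of two iterates, a coupled estimate for $(\tilde X,\tilde Y)$ closed by a weakly singular Gr\"onwall lemma, and interpolation with the moment bounds to upgrade to $L^p$. However, the key step is not correct as written. First, you freeze the time argument, using $\sigma(\eta_n(s),X^n_{\eta_n(s)})$; Assumption~\ref{ass:coefficients} gives only measurability in $t$ and H\"older continuity in $x$ uniformly in $t$, so $|\sigma(\eta_n(s),x)-\sigma(s,x)|$ is not controlled and the limit of your scheme cannot be identified with the SVE~\eqref{eq:SVE}. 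The paper deliberately discretizes only the state variable, not the time argument of the coefficients.

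Second, and more seriously, the claim that the quadratic-variation term tends to zero as $k\to\infty$ is false once the discretization error enters. Writing $U^n_s:=|X^n_s-X^n_{\eta_n(s)}|$, the H\"older bound gives
$\phi_k''(\tilde X_s)\,|\sigma(\cdot,X^n_{\eta_n(s)})-\sigma(\cdot,X^m_{\eta_m(s)})|^2 \le C\bigl(\tfrac1k + \tfrac{1}{k\,a_k^{1+2\xi}}(U^n_s+U^m_s)^{1+2\xi}\bigr)$,
where $a_k$ is the lower edge of the support of $\psi_k$; for $\xi=0$ one has $a_k\sim e^{-k(k+1)/2}$, so the second term explodes as $k\to\infty$ for fixed $n,m$. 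Your remainder $r^{n,m}$ therefore neither tends to zero as $n,m\to\infty$ uniformly in $k$, nor as $k\to\infty$ for fixed $n,m$: the two limits do not commute. One can rescue a qualitative Cauchy argument by sending $n,m\to\infty$ first (after the Gr\"onwall step) and $k\to\infty$ second, but that is not what you wrote, and it yields no rate. The paper instead uses the two-parameter Gy\"ongy--R\'asonyi mollifier $\phi_{\delta\epsilon}$ with $\delta,\epsilon$ chosen as explicit functions of $m$ together with the hyper-fine mesh $2^{-m^5}$, precisely so that the competing terms $\epsilon^{2\xi}/\ln\delta$ and $\delta\epsilon^{-1}2^{-(1+2\xi)\beta m^5}/\ln\delta$ both produce a summable rate $C_m$ (see \eqref{eq:C_m} and Remark~\ref{rem:approximation}); this quantitative control is what allows the comparison of consecutive iterates. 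Finally, applying It\^o's formula to $\phi_k(Y^n_t-Y^m_t)$ does not produce the Yamada--Watanabe cancellation, since $\phi_k''$ would be evaluated at $\tilde Y_s$ while the quadratic variation involves the increment of $X$ at the grid points; the bound on $\E[|\tilde Y_t|]$ must instead be obtained algebraically, by integration by parts and the non-degeneracy $|K_\sigma(t,t)|\ge C$ from Assumption~\ref{ass:kernels}, a condition your proposal never invokes.
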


The construction of a strong solution relies on an Euler type approximation. To set up the approximation, we use the sequence $(\pi_{m})_{m\in\N}$ of partitions defined by
\begin{equation*}
  \pi_m:=\lbrace t_0^m,\dots,t_{2^{m^5}}^{m}\}
  \quad \text{with}\quad
  t^m_i:= \frac{iT}{2^{m^5}}\quad \text{for } i=0,\dots,2^{m^5}
\end{equation*}
and introduce, for every $m\in \N$, the function $\kappa_m\colon [0,T]\to [0,T]$ by
\begin{equation*}
  \kappa_m(T):=T \quad \text{and}\quad
  \kappa_m(t):=t_{i}^m\quad\text{for } t_i^m \leq t < t^m_{i+1},\quad\text{for }i=0,1,\dots,2^{m^5}-1.
\end{equation*}
For every $m\in\N$, we iteratively define the process $(X^m(t))_{t\in [0,T]}$ by $X^m(0):=x_0(0)$ and for $t\in (t^m_i,t^m_{i+1}]$ by
\begin{align*}
  X^m(t):=&x_0(t)+\int_0^{t^m_i} K_\mu(s,t)\mu(s,X^m(\kappa_m(s)))\dd s
  +\int_{t^m_i}^{t} K_\mu(s,t)\mu(s,X^m(t^m_{i}))\dd s \\
  &+\int_0^{t^m_i} K_\sigma(s,t)\sigma(s,X^m(\kappa_m(s)))\dd B_s
  +\int_{t^m_i}^{t} K_\sigma(s,t)\sigma(s,X^m(t^m_{i}))\dd B_s,
\end{align*}
for $i=0,\dots,2^{m^5}-1$.

Note that we neither discretize the kernels~$K_{\mu},K_{\sigma}$ nor the time-component in the coefficients~$\mu,\sigma$. While these additional discretizations might be desirable to derive an implementable numerical scheme, for our purpose of proving the existence of a strong solution, it is sufficient to avoid this additional approximation.

\begin{lemma}\label{lem:XmL1}
  Suppose Assumptions~\ref{ass:kernels} and~\ref{ass:coefficients}. $X^m\in L^q(\Omega\times [0,T])$ for every $m\in\N$ and any $q\in [1,\infty)$. In particular, $X^m\in L^{p}(\Omega\times [0,T])$ for every $m\in\N$ and $p>\max\lbrace \frac{1}{\gamma},1+\frac{2}{\epsilon} \rbrace$.
\end{lemma}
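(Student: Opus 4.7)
My plan is to proceed by induction on the index $i$ of the subinterval $(t_i^m,t_{i+1}^m]$, establishing the stronger statement that for every $q\in[1,\infty)$ and every $m\in\N$,
\begin{equation*}
  \sup_{t\in[0,T]}\E[|X^m(t)|^q]<\infty,
\end{equation*}
which immediately yields $X^m\in L^q(\Omega\times[0,T])$ upon integrating in $t$. The base case $i=0$ is trivial since $X^m(0)=x_0(0)$ is deterministic. The inductive hypothesis is that $X^m(t_j^m)\in L^q(\Omega)$ for all $j\le i$. The crucial structural observation is that on $(t_i^m,t_{i+1}^m]$ the random integrands $\mu(s,X^m(\kappa_m(s)))$ and $\sigma(s,X^m(\kappa_m(s)))$ for $s\in[0,t_i^m]$ only evaluate $X^m$ at the previous grid points $t_0^m,\ldots,t_{i-1}^m$, while the final pieces freeze the value at $X^m(t_i^m)$; all of these are in $L^q$ by the inductive hypothesis.

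For the inductive step I fix $q\geq 2$ (lower moments then follow by Jensen) and decompose $X^m(t)=x_0(t)+I_1+I_2+I_3+I_4$ according to the four integrals in the definition, then bound $\E[|X^m(t)|^q]\leq C_q(|x_0(t)|^q+\sum_{k=1}^4\E[|I_k|^q])$. Since Assumption~\ref{ass:kernels} postulates $K_\mu,K_\sigma$ continuous on the compact set $\Delta_T$, the kernels are uniformly bounded by some $M<\infty$. For the Lebesgue integrals $I_1,I_2$ I apply Jensen's inequality (or equivalently H\"older with exponents $(q,q/(q-1))$) together with Fubini to pull the $q$-th power inside, then use the linear growth of $\mu$ (Assumption~\ref{ass:coefficients}~(i)) to obtain
\begin{equation*}
  \E[|I_1|^q]+\E[|I_2|^q]\le C_{q,\mu,M,T}\bigg(1+\int_0^{t_i^m}\E[|X^m(\kappa_m(s))|^q]\dd s+\E[|X^m(t_i^m)|^q]\bigg).
\end{equation*}
For the stochastic integrals $I_3,I_4$ I invoke the Burkholder--Davis--Gundy inequality followed by Jensen to get the analogous bound
\begin{equation*}
  \E[|I_3|^q]+\E[|I_4|^q]\le C_{q,\sigma,M,T}\bigg(1+\int_0^{t_i^m}\E[|X^m(\kappa_m(s))|^q]\dd s+\E[|X^m(t_i^m)|^q]\bigg).
\end{equation*}
By the inductive hypothesis all integrands on the right-hand side are finite, so $\sup_{t\in(t_i^m,t_{i+1}^m]}\E[|X^m(t)|^q]<\infty$ and in particular $X^m(t_{i+1}^m)\in L^q(\Omega)$, closing the induction. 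Since there are only finitely many ($2^{m^5}$) subintervals, piecing the bounds together yields the uniform bound on $[0,T]$.

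There is essentially no obstacle here, and this is why the proof does not require a Gr\"onwall argument (contrast with Lemma~\ref{lem:bound} for a true solution): the Euler scheme freezes $X^m$ at grid points in the integrands, so on each subinterval the estimate reduces to the already-controlled previous values. The only point requiring a tiny bit of care is the case $q\in[1,2)$, which I handle by Jensen's inequality from the $q=2$ bound, and the bookkeeping of constants $C_{q,\mu,\sigma,M,T}$ depending on $m$ only through the fixed partition size, which is harmless since $m$ is fixed throughout. The final assertion that $X^m\in L^p$ for $p>\max\{1/\gamma,1+2/\epsilon\}$ is a trivial specialization.
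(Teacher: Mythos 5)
Your proposal is correct and follows essentially the same route as the paper: an iteration over the partition subintervals exploiting that the Euler scheme only evaluates $X^m$ at earlier grid points, combined with H{\"o}lder/Burkholder--Davis--Gundy and the linear growth condition to bound $\sup_{t}\E[|X^m(t)|^q]$ without any Gr{\"o}nwall argument. The paper's proof is just a more compressed version of the same induction.
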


\begin{proof}
  For $m\in \N$ and $q\in (2,\infty)$ we define
  \begin{equation*}
    g_m(t):=\E\big[|X^m(t)|^q\big]
    \quad \text{for } t\in [0,T].
  \end{equation*}
  To prove that $X^m\in L^q(\Omega\times [0,T])$, it is sufficient to show that the function $g_m$ is bounded on $[0,T]$ since
  \begin{equation*}
    \E\bigg[\int_0^T |X^m(t)|^q \dd t \bigg]
    = \int_0^T g_m(t) \dd t\leq T \sup_{t\in [0,T]} g_m(t).
  \end{equation*}

  For $t=0$ we have $\E[|X^m(0)|^q]=|x_0(0)|^q<\infty$ and, thus, $g_m$ is bounded on $[0,t_1^m]$. For $t\in (t^m_i,t^m_{i+1}]$ with $i= 1,\dots, 2^{m^5}-1$, using similar estimates as in \eqref{ineq:derivation_momentbound}, we iteratively get that
  \begin{align*}
    &\E[|X^m(t)|^q]\notag\\
    & \quad\leq C \bigg(|x_0(t)|^q\notag\\
    &\qquad+ \E\bigg[ \bigg| \int_0^{t_{i}^m}K_\mu(s,t)\mu(s,X^m(\kappa_m(s)))\dd s \bigg|^q \bigg]+ \E\bigg[ \bigg| \int_{t_{i}^m}^{t}K_\mu(s,t)\mu(s,X^m(t_{i}^m))\dd s \bigg|^q \bigg]\notag\\
    &\qquad + \E\bigg[ \bigg| \int_0^{t_{i}^m}K_\sigma(s,t)\sigma(s,X^m(\kappa(s)))\dd B_s \big|^q \bigg] + \E\bigg[ \bigg| \int_{t^m_i}^{t}K_\sigma(s,t)\sigma(s,X^m(t_{i}^m))\dd B_s \big|^q \bigg] \bigg)\notag\\
    & \quad\leq C \bigg(|x_0(t)|^q
    + \int_0^{t_{i}^m} \E \big [|\mu(s,X^m(\kappa_m(s)))|^q\big]\dd s
    + \int_{t_{i}^m}^{t}\E\big [|\mu(s,X^m(t_{i}^m))|^q\big]\dd s \notag\\
    &\qquad +  \int_0^{t_{i}^m} \E\big[ |\sigma(s,X^m(\kappa(s)))|^q\big]\dd s +
    \int_{t^m_i}^{t} \E\big[ |\sigma(s,X^m(t_{i}^m))|^q\big]\dd s \bigg)\notag\\
    &\quad \leq C\bigg(1 + \int_0^{t_{i}^m}\E[|X^m(\kappa(s))|^q]\dd s+ \int_{t_{i}^m}^t\E[|X^m(t_{i}^m)|^q]\dd s\bigg) <\infty.
  \end{align*}
  Therefore, $\sup_{t\in [0,T]} g_m(t)<\infty$.
\end{proof}

It can be quickly seen that the integrability and regularity results from Section~\ref{sec:regularity} also hold for the process~$(X^m(t))_{t\in[0,T]}$.

\begin{proposition}\label{prop:Xm}
  Suppose Assumptions \ref{ass:kernels} and \ref{ass:coefficients}. Let $\gamma\in [0,1/2]$ be as given in Assumption~\ref{ass:kernels}. Then, for any $m\in\N$, there is a constant $C>0$ such that
  \begin{equation*}
    \sup\limits_{t\in [0,T]}\E[|X^m(t)|^q]\leq C \bigg(1+\sup\limits_{t\in [0,T]}|x_0(t)|^q\bigg).
  \end{equation*}
  holds for any $q\geq 1$.
  Moreover, for any $\beta\in (0,\gamma)$, there is a constant $C>0$ such that
  \begin{equation*}
    \mathbb{E}[|X^m(t')-X^m(t)|^q]\leq C |t'-t|^{\beta q}
  \end{equation*}
  holds for all $t',t\in [0,T]$. Consequently, $(X^m (t))_{t\in[0,T]}$ is $\beta$-H{\"o}lder continuous for any $\beta\in (0,\gamma)$.
\end{proposition}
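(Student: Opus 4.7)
The plan is to adapt the proofs of Lemma~\ref{lem:bound}, Lemma~\ref{lem:help_regularity} and Corollary~\ref{cor_regularity} almost verbatim, taking advantage of the fact that on each subinterval $[t_i^m,t_{i+1}^m)$ the definition of $\kappa_m$ gives $\kappa_m(s)=t_i^m$. Therefore, combining the two pairs of integrals in the definition of $X^m$, the process satisfies
\begin{equation*}
  X^m(t)=x_0(t)+\int_0^t K_\mu(s,t)\mu(s,X^m(\kappa_m(s)))\dd s+\int_0^t K_\sigma(s,t)\sigma(s,X^m(\kappa_m(s)))\dd B_s
\end{equation*}
for every $t\in[0,T]$. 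This representation puts $X^m$ into the same structural form as the SVE~\eqref{eq:SVE}, the only change being that the coefficients are frozen at the discretized time $\kappa_m(s)$. All estimates used in Section~\ref{sec:regularity} pass through the composition $s\mapsto(s,X^m(\kappa_m(s)))$ without trouble, since they rely only on the linear growth bound for $\mu,\sigma$ and on the kernel inequalities in Assumption~\ref{ass:kernels}, both of which are insensitive to this time-freezing.

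For the moment bound I would repeat the computation of \eqref{ineq:derivation_momentbound} for the representation above, using H{\"o}lder's inequality with the exponents $q' \leq 1+\epsilon$ and $\tilde{q}\leq 1+\epsilon/2$, the Burkholder--Davis--Gundy inequality, and the linear growth of $\mu,\sigma$, to arrive at
\begin{equation*}
  \E[|X^m(t)|^q]\leq C\bigg(1+\sup_{r\in[0,T]}|x_0(r)|^q+\int_0^t \E[|X^m(\kappa_m(s))|^q]\dd s\bigg),
\end{equation*}
where $C$ is independent of $m$ because $\kappa_m(s)\leq s$ and the kernel norms are finite by Assumption~\ref{ass:kernels}. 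Setting $h_m(t):=\sup_{s\leq t}\E[|X^m(s)|^q]$ and observing that $\E[|X^m(\kappa_m(s))|^q]\leq h_m(s)$, we obtain an integral inequality for $h_m$. By Lemma~\ref{lem:XmL1}, $h_m(T)<\infty$ for every $m$, so Gr{\"o}nwall's lemma yields the desired bound with a constant independent of $m$.

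For the Hölder estimate I would mimic the four-term decomposition $A,B,C,D$ from the proof of Lemma~\ref{lem:help_regularity}, now applied to $X^m(t')-X^m(t)$. Each term is bounded via H{\"o}lder's inequality, the Burkholder--Davis--Gundy inequality, the linear growth assumption and the kernel estimates \eqref{eq:regularity kernels} (which follow from Assumption~\ref{ass:kernels} as noted in the proof of Corollary~\ref{cor:regularity}); the uniform moment bound just established supplies $\sup_m\sup_{t\in[0,T]}\E[|X^m(\kappa_m(t))|^q]<\infty$, so the constants are again independent of $m$. This gives $\E[|X^m(t')-X^m(t)|^q]\leq C|t'-t|^{\beta q}$ for every $\beta\in(0,\gamma)$ and any sufficiently large $q$, and Kolmogorov--Chentsov then yields a $\beta$-H{\"o}lder continuous modification for every $\beta\in(0,\gamma-1/q)$; sending $q\to\infty$ covers every $\beta\in(0,\gamma)$.

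The only point requiring mild care is ensuring that all constants are independent of the partition index $m$: this is why the combined-integral representation is useful, as it shows that the kernel estimates from Assumption~\ref{ass:kernels} enter in exactly the same way as for a genuine solution, and the discretization only affects the coefficient's spatial argument through $\kappa_m(s)$, which is dominated by the pointwise supremum that already appears in the moment bound. Once this is noted, there is no genuine obstacle and the argument reduces to copying the proofs of Lemma~\ref{lem:bound}, Lemma~\ref{lem:help_regularity} and Corollary~\ref{cor_regularity}.
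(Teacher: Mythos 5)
Your proposal is correct and takes essentially the same route as the paper: the moment bound is obtained by repeating the estimate \eqref{ineq:derivation_momentbound} for $X^m$ and applying Gr{\"o}nwall's lemma (with the a priori boundedness supplied by Lemma~\ref{lem:XmL1}), and the H{\"o}lder estimate by redoing the $A$--$D$ decomposition of Lemma~\ref{lem:help_regularity} followed by Kolmogorov--Chentsov. The only cosmetic difference is that you first merge the two integrals into the single representation \eqref{eq:X_n} while the paper keeps them split at $t_i^m$, and your explicit remark that the constants are uniform in $m$ is a welcome clarification.
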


\begin{proof}
  The $L^q$-bound of $(X^m(t))_{t\in [0,T]}$ follows by similar arguments as used in the proof of Lemma~\ref{lem:bound}.

  For $t\in (t_i^m,t_{i+1}^m]$ and fixed $m\in \N$ and $q\geq 2$, we get
  \begin{align*}
    &\E [|X^m(t)|^q]\leq C \bigg(|x_0(t)|^q+\int_0^{t_i^m} \E[|X^m(\kappa_m(s))|^q] \dd s+\int_{t_i^m}^t \E[|X^m(t_i^m)|^q] \dd s \bigg),
  \end{align*}
  where we used H{\"o}lder's inequality, Burkholder--Davis--Gundy's inequality, and the linear growth condition (Assumption~\ref{ass:coefficients}~(i)). Hence, we arrive at
  \begin{equation*}
    \sup_{u\in [0, t]} \E [|X^m(u)|^q]
    \leq C \bigg( \sup_{u\in [0,T]} |x_0(u)|^q+\int_0^{t} \sup_{u\in [0, s]} \E[|X^m(u)|^q]\dd s\bigg).
  \end{equation*}
  Since $t \mapsto \sup_{u\in [0, t]} \E [|X^m(u)|^q]$ is bounded by the proof of Lemma~\ref{lem:XmL1}, we can apply Gr{\"o}nwall's lemma (see e.g. \cite[Lemma~26.9]{Klenke2014}) to get
  \begin{equation*}
     \sup_{t\in [0, T]} \E [|X^m(t)|^q ] \leq C \bigg(1+\sup\limits_{ t \in [0,T]}|x_0(t)|^q \bigg), \quad t\in [0,T],
  \end{equation*}
  which reveals the assertion.

  The regularity statement follows by adapting the proof of Lemma~\ref{lem:help_regularity}. Indeed, the regularity assumption on the kernels (Assumption~\ref{ass:kernels}) yields that condition~\eqref{eq:regularity kernels} is fulfilled. Thus, performing similar estimations as in the proof of Lemma~\ref{lem:help_regularity} and using the just established $L^q$-bound of $X^m$, we obtain
  \begin{equation*}
    \mathbb{E}[|X^m(t')-X^m(t)|^q]\leq C |t'-t|^{\beta q},
  \end{equation*}
  for $\beta\in (0,\gamma)$. Hence, by Kolmogorov--Chentsov's theorem (see e.g. \cite[Theorem~21.6]{Klenke2014}), there exists a modification of $(X^m(t))_{t\in [0,T]}$ which is $\delta^{\prime}$-H{\"o}lder continuous for $\delta^{\prime}\in (0,\beta -1/q)$. Sending $\beta \to \gamma $ and $q\to\infty$ leads to the claimed H{\"o}lder regularity.
\end{proof}

Due to Proposition~\ref{prop:Xm}, for every $m\in\N$ the process $(X^m(t))_{t\in[0,T]}$ has a continuous modification. Hence, keeping the definition of $(X^m(t))_{t\in[0,T]}$ in mind, we see that $(X^m(t))_{t\in[0,T]}$ fulfills the integral equation
\begin{equation}\label{eq:X_n}
  X^m(t)=x_0(t)+\int_0^t K_\mu(s,t)\mu(s,X^m(\kappa_m(s)))\dd s +\int_0^t K_\sigma(s,t)\sigma(s,X^m(\kappa_m(s)))\dd B_s,
\end{equation}
for $t\in [0,T]$. Moreover, using the just derived regularity estimates of $(X^m(t))_{t\in[0,T]}$, we obtain the following bound.

\begin{corollary}\label{cor:bound}
  Suppose Assumptions \ref{ass:kernels} and \ref{ass:coefficients}. Then, for any $q,\delta\in (0,\infty)$, there is a constant $C>0$ such that
  \begin{equation*}
    \E\left[\left( \int_0^T|X^m(s)-X^m(\kappa_m(s))|^\delta \dd s \right)^q\right]\leq C2^{-\delta q\beta m^{5}},
  \end{equation*}
  holds for all $\beta\in (0,\gamma)$ and $m\in\N$.
\end{corollary}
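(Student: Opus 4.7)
The plan is to combine the uniform mesh estimate $s-\kappa_m(s)\leq T 2^{-m^{5}}$ with the time-regularity of $X^m$ from Proposition~\ref{prop:Xm}, after exchanging the outer power~$q$ with the time integral via either H{\"o}lder's or Jensen's inequality, depending on whether $q\geq 1$ or $q<1$.

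For $q\geq 1$, I would apply H{\"o}lder's inequality to $\int_0^T 1\cdot|X^m(s)-X^m(\kappa_m(s))|^\delta\,\dd s$ and then Fubini's theorem to obtain
\begin{equation*}
  \E\!\left[\Big(\int_0^T|X^m(s)-X^m(\kappa_m(s))|^\delta\,\dd s\Big)^q\right]\leq T^{q-1}\int_0^T\E\big[|X^m(s)-X^m(\kappa_m(s))|^{\delta q}\big]\,\dd s.
\end{equation*}
For $q<1$, concavity of $x\mapsto x^q$ together with Jensen's inequality gives instead
\begin{equation*}
  \E\!\left[\Big(\int_0^T|X^m(s)-X^m(\kappa_m(s))|^\delta\,\dd s\Big)^q\right]\leq \Big(\int_0^T\E\big[|X^m(s)-X^m(\kappa_m(s))|^\delta\big]\,\dd s\Big)^q.
\end{equation*}
In either case the problem reduces to estimating $\E[|X^m(s)-X^m(\kappa_m(s))|^r]$ for a single exponent $r\in\{\delta,\delta q\}$.

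To produce such a bound for an arbitrary $r>0$, I would first lift $r$ to $r^\prime:=\max(r,2)$ via Jensen's inequality $\E[|Z|^r]\leq (\E[|Z|^{r^\prime}])^{r/r^\prime}$, so that the time-regularity estimate from Proposition~\ref{prop:Xm} at exponent $r^\prime\geq 2$ becomes applicable. Combined with $s-\kappa_m(s)\leq T 2^{-m^{5}}$, this yields
\begin{equation*}
  \E\big[|X^m(s)-X^m(\kappa_m(s))|^r\big]\leq C(s-\kappa_m(s))^{\beta r}\leq C' 2^{-\beta r m^{5}},
\end{equation*}
for every $\beta\in(0,\gamma)$, with a constant independent of $m$ and $s$. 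Substituting this back into the two displayed inequalities above and integrating over $[0,T]$ produces the claimed bound $C 2^{-\delta q\beta m^{5}}$ in both cases.

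There is no substantive obstacle; the argument is essentially bookkeeping to handle the cases $q\geq 1$ versus $q<1$ and $r\geq 2$ versus $r<2$, with the only nontrivial input being the regularity estimate from Proposition~\ref{prop:Xm}, whose constant is uniform in $m$.
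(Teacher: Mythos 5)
Your argument is correct and follows essentially the same route as the paper: reduce via H\"older/Jensen and Fubini to a pointwise bound on $\E[|X^m(s)-X^m(\kappa_m(s))|^{r}]$, then invoke the time-regularity estimate of Proposition~\ref{prop:Xm} together with $|s-\kappa_m(s)|\leq T2^{-m^5}$. The only (immaterial) difference is that you apply Jensen pointwise to lift small exponents, whereas the paper first treats $q$ with $q\delta>2$ and then handles small $q$ by applying Jensen to the whole integral raised to a larger power.
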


\begin{proof}
  Let $\delta>0$ be fixed. First, assume $q\geq1$ is sufficiently large such that $q\delta>2$. For $\beta\in (0,\gamma)$ and $m\in\N$, we use H{\"o}lder's inequality, Fubini's theorem and Proposition~\ref{prop:Xm} to get
  \begin{align}\label{eq:integral estmate approximation}
    \E\bigg[\bigg( \int_0^T|X^m(s)-X^m(\kappa_m(s))|^\delta \dd s \bigg)^q\bigg]
    &\leq C \E\bigg[ \int_0^T|X^m(s)-X^m(\kappa_m(s))| ^{\delta q} \dd s\bigg] \nonumber\\
    &= C \int_0^T\E\bigg[|X^m(s)-X^m(\kappa_m(s))|^{\delta q}\bigg] \dd s\nonumber\\
    &\leq C\int_0^T|s-\kappa_m(s)|^{\delta q \beta}\dd s\nonumber\\
    &\leq C 2^{-\delta q\beta m^{5}}.
  \end{align}
  For $0 < q\leq \frac{2}{\delta}$, we choose $q^\prime >q$ is sufficiently large such that $q^\prime \delta>2$. Applying Jensen's inequality and \eqref{eq:integral estmate approximation}, we obtain
  \begin{align*}
    \E\bigg[\bigg( \int_0^T|X^m(s)-X^m(\kappa_m(s))|^\delta \dd s \bigg)^q\bigg]
    &\leq C \E\bigg[\bigg(\int_0^T|X^m(s)-X^m(\kappa_m(s))|^{\delta } \dd s \bigg)^{q^\prime} \bigg]^{\frac{q}{q^\prime}}\nonumber\\
    &\leq C 2^{-\delta q\beta m^{5}}.
  \end{align*}
\end{proof}

\begin{lemma}\label{lem:1}
  Suppose Assumptions~\ref{ass:kernels} and~\ref{ass:coefficients}. Then, there is a sequence $(C_m)_{m\in\N}$ of constants such that
  \begin{equation*}
    \E[|X^{m+1}(t)-X^m(t)|]
    \leq C_m
  \end{equation*}
  holds for every $t\in[0,T]$, and $\sum_{m=1}^\infty C_m^{1/4}<
  \infty$.
\end{lemma}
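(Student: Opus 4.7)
The plan is to adapt the Yamada--Watanabe regularization trick to the Volterra setting, quantifying the residual error introduced by the Euler discretization. Write $Y_t := X^{m+1}(t) - X^m(t)$. By the argument of Lemma~\ref{lem:semimartingale property}, both $X^{m+1}(t)-x_0(t)$ and $X^m(t)-x_0(t)$ admit semimartingale decompositions (with piecewise-constant-in-$s$ integrands in the $X^m$ case), so $Y$ is itself a semimartingale and It\^o's formula is applicable.

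I would then introduce the standard Yamada--Watanabe sequence of smooth convex approximations $\phi_n \uparrow |\cdot|$ with $|\phi_n'| \leq 1$ and $\phi_n''(x) \leq 2/(n |x|^{1+2\xi})$ supported on an annulus $\{a_n \leq |x| \leq a_{n-1}\}$, where $a_n \downarrow 0$ is chosen so that $\int_{a_n}^{a_{n-1}} u^{-1-2\xi}\dd u = n$. Applying It\^o's formula to $\phi_n(Y_t)$ and invoking Assumption~\ref{ass:coefficients}~(ii) on $\sigma$, the second-order term is bounded by $Ct/n$, up to discretization errors of the form $\E[|X^k(s)-X^k(\kappa_k(s))|^{1+2\xi}]$ for $k\in\{m,m+1\}$, which are controlled by Corollary~\ref{cor:bound}. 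The first-order terms, using $|\phi_n'|\leq 1$ and the Lipschitz property of $\mu$, yield a Lebesgue piece $C\int_0^t \E[|Y_s|]\dd s$ plus a Volterra contribution involving $\partial_2 K_\mu$, $\partial_2 K_\sigma$ and $\partial_{21} K_\sigma$, whose weak singularity is integrable by Assumption~\ref{ass:kernels}.

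Because the Volterra kernel does not give a pointwise-in-$s$ It\^o structure, the estimate on $\E[|Y_t|]$ has to be coupled with one on the auxiliary process
\begin{equation*}
  \tilde Y_t := \int_0^t \partial_2 K_\sigma(s,t)\bigl[\sigma(s,X^{m+1}(\kappa_{m+1}(s))) - \sigma(s,X^m(\kappa_m(s)))\bigr]\dd B_s
\end{equation*}
plus the analogous drift piece, treated by the same Yamada--Watanabe manipulation exactly as in the proof sketched for Corollary~\ref{cor:main result}. Adding the two inequalities and setting $M(t):=\sup_{s\leq t}(\E[|Y_s|] + \E[|\tilde Y_s|])$, one arrives at $M(t)\leq D_m + 1/n + C\int_0^t M(s)\,r(s)\dd s$ with $r$ integrable (as $\alpha<1/2$). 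Sending $n\to\infty$ kills the $1/n$ contribution, and a weakly singular Gr{\"o}nwall lemma then yields $M(t) \leq D_m$ where $D_m = O(2^{-c m^5})$ for some constant $c>0$ determined by $\xi$, $\gamma$ and the rate in Corollary~\ref{cor:bound}. Setting $C_m := \sup_{t\in [0,T]} \E[|Y_t|]$ gives $C_m \leq D_m$, and the summability $\sum_m C_m^{1/4} < \infty$ follows immediately from the super-exponential factor $2^{-c m^5/4}$.

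The main obstacle is the coupled bookkeeping forced by the Volterra kernel, combined with keeping the Yamada--Watanabe estimate tight when $\sigma$ is evaluated at the discretization points $\kappa_k(s)$ rather than at $s$ --- this is precisely where one cannot replace the H\"older exponent by a generic Yamada--Watanabe $\rho$-modulus, as the authors note in the preceding remark. A secondary bookkeeping issue is to verify that the singular contribution $|\partial_1 K_\sigma(s,t)|+|\partial_2 K_\sigma(s,s)|+\int_s^t|\partial_{21}K_\sigma(s,u)|\dd u \leq C(t-s)^{-\alpha}$ only ever enters in a way that the weakly singular Gr{\"o}nwall inequality digests, which is ensured by Assumption~\ref{ass:kernels}~(ii).
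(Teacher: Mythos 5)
Your overall architecture matches the paper's: a semimartingale decomposition of $\tilde X^m := X^{m+1}-X^m$ via the Protter-type transformation, It\^o's formula applied to a smoothed absolute value, the coupled estimate with an auxiliary stochastic-integral process, and a weakly singular Gr\"onwall lemma. But there is a genuine gap at the heart of the argument: you use the \emph{classical one-parameter} Yamada--Watanabe mollifier $\phi_n$ (annulus $[a_n,a_{n-1}]$ with $\int_{a_n}^{a_{n-1}}u^{-1-2\xi}\dd u=n$) and then claim that ``sending $n\to\infty$ kills the $1/n$ contribution.'' This step fails. The second-order It\^o term is not simply $O(1/n)$ plus an $n$-independent discretization error: expanding $\bigl(\sigma(s,X^{m+1}(\kappa_{m+1}(s)))-\sigma(s,X^m(\kappa_m(s)))\bigr)^2\leq C\bigl(|\tilde X^m_s|+U^m_s+U^{m+1}_s\bigr)^{1+2\xi}$ with $U^k_s:=|X^k(s)-X^k(\kappa_k(s))|$, the cross term is multiplied by $\phi_n''$, whose height on its support is of order $(n\,a_n^{1+2\xi})^{-1}\to\infty$. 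So the resulting bound is of the form $C/n + C\,(n\,a_n^{1+2\xi})^{-1}\,2^{-(1+2\xi)\beta m^5}$, and you cannot let $n\to\infty$ for fixed $m$. One must couple the mollifier parameters to $m$, and with the one-parameter family this is impossible in the borderline case $\xi=0$: there $a_n\sim e^{-n(n+1)/2}$, so making $1/n$ summable in $m$ forces $a_n^{-1}$ to grow like $e^{m^{10}}$, which swamps $2^{-\beta m^5}$.

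This is precisely why the paper uses the Gy\"ongy--R\'asonyi \emph{two-parameter} approximation $\phi_{\delta\epsilon}$ with $\phi_{\delta\epsilon}''(x)\leq \frac{2}{|x|\ln(\delta)}\1_{[\epsilon/\delta,\epsilon]}(|x|)$: the height $1/\ln(\delta)$ of the second derivative is decoupled from the location $[\epsilon/\delta,\epsilon]$ of its support. Choosing $\delta=2^{\rho m^5}$ and $\epsilon=2^{-(1+2\xi)\beta m^5/2}$ gives $\E[I_5]\leq C\bigl(\epsilon^{2\xi}/\ln(\delta)+\tfrac{\delta}{\epsilon\ln(\delta)}2^{-(1+2\xi)\beta m^5}\bigr)$, which for $\xi=0$ still decays like $m^{-5}$ --- enough for $\sum_m C_m^{1/4}<\infty$, though note your claimed super-exponential rate $D_m=O(2^{-cm^5})$ is not attainable when $\xi=0$. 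The paper's Remark~\ref{rem:approximation} makes exactly this point. A secondary, repairable imprecision: the auxiliary process you couple with should be $\tilde Y^m_t=\int_0^t\bigl(\sigma(s,X^{m+1}(\kappa_{m+1}(s)))-\sigma(s,X^m(\kappa_m(s)))\bigr)\dd B_s$ (the kernel-free integral, recovered from $\tilde X^m$ by inverting $K_\sigma(t,t)\tilde Y^m_t=\tilde X^m_t-\dots$ using $|K_\sigma(t,t)|\geq C$), not the $\partial_2K_\sigma$-weighted integral $\tilde H^m$, which is itself controlled by $\tilde Y^m$ via integration by parts.
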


\begin{proof}
  Following~Gy{\"o}ngy--R{\'a}sonyi~\cite{Gyongy2011} and Yamada--Watanabe~\cite{Yamada1971}, we approximate the function $\phi(x):=|x|$ by smooth functions $\phi_{\delta\epsilon}(x)$ for $\delta>1$ and $\epsilon>0$. To that end, note that
  \begin{equation*}
    \int_{\frac{\epsilon}{\delta}}^{\epsilon}\frac{1}{x}\dd x=\ln(\delta),
  \end{equation*}
  and, thus, there is a continuous, non-negative function $\psi_{\delta \epsilon}\colon \R_+\to \R_+$, that is zero outside the interval $[\frac{\epsilon}{\delta},\epsilon]$, $\int_0^{\infty}\psi_{\delta\epsilon}(x)\dd x=1$ and satisfies
  \begin{equation*}
    \psi_{\delta \epsilon}(x)\leq \frac{2}{x \ln(\delta)}.
  \end{equation*}
  We define
  \begin{equation*}
    \phi_{\delta \epsilon}(x):=\int_0^{|x|}\int_0^y \psi_{\delta \epsilon}(z)\dd z\dd y
    \quad \text{for } x\in\R,
  \end{equation*}
  such that the inequalities 
  \begin{equation}\label{ineq:phi}
    |x|\leq \phi_{\delta \epsilon}(x)+\epsilon,\quad
    0\leq |\phi_{\delta\epsilon}'(x)|\leq 1
    \quad\text{and}\quad
    \phi_{\delta\epsilon}''(x)=\psi_{\delta\epsilon}(|x|)\leq \frac{2}{|x|\ln(\delta)}\1_{[\frac{\epsilon}{\delta},\epsilon]}(|x|)
  \end{equation}
  hold for all $x\in\R$, where $\1_{[\frac{\epsilon}{\delta},\epsilon]}$ denotes the indicator function of the interval $[\frac{\epsilon}{\delta},\epsilon]$.

  To apply It{\^o}'s formula to $\phi_{\delta\epsilon}(\tilde{X}^m_t)$, where
  \begin{equation*}
    \tilde{X}^m_t:=X^{m+1}(t)-X^m(t),\quad t\in [0,T],
  \end{equation*}
  we need to find the semimartingale decomposition of $(\tilde{X}^m_t)_{t\in[0,T]}$. For this purpose, we introduce the local martingale
  \begin{equation*}
    \tilde{Y}^m_t:=Y^{m+1}_t-Y^m_t
    \quad \text{with}\quad Y^m_t:=\int_0^t\sigma(s,X^m(\kappa_m(s)))\dd B_s
  \end{equation*}
  and the process of finite variation
  \begin{equation*}
    \tilde{Z}_t^m:=\int_0^t \mu(s,X^{m+1}(\kappa_{m+1}(s)))\dd s - \int_0^t \mu(s,X^{m}(\kappa_{m}(s)))\dd s,\quad \text{for } t\in [0,T].
  \end{equation*}
  Since $\partial_2 K_{\mu}\in L^{1}(\Delta_T)$, $\partial_2 K_\sigma\in L^{2}(\Delta_T)$ (see Assumption~\ref{ass:kernels}) and the integrability property of $(X^m(t))_{\in [0,T]}$ as presented in Proposition~\ref{prop:Xm}, we obtain, as in the proof of Lemma~\ref{lem:semimartingale property}, the following semimartingale decomposition
  \begin{align*}
    \tilde{X}^m_t&= \int_0^t K_\mu(s,t)\dd \tilde{Z}_s^m+\int_0^t K_\sigma(s,t)\dd \tilde{Y}_s^m\notag\\
    &=\int_0^t K_\mu(s,s)\dd \tilde{Z}_s^m+\int_0^t \Big(\int_0^s \partial_2 K_\mu(u,s)\dd \tilde{Z}_u^m\Big)\dd s\notag\\
    &\qquad + \int_0^t \tilde{H}_s^m \dd s + \int_0^tK_\sigma(s,s)\dd \tilde{Y}_s^m,
  \end{align*}
  where $\tilde{H}^m_t:=H^{m+1}_t-H^m_t$ with $H_t^m:=\int_0^t\partial_2 K_\sigma(s,t)\dd Y_s^m$. Note that the quadratic variation of $(\tilde{X}^m_t)_{t\in [0,T]}$ is given by
  \begin{align*}
    \langle\tilde{X}^m\rangle_t
    &=\bigg\langle \int_0^{\cdot}K_\sigma(s,s)\Big(\sigma(s,X^{m+1}\big(\kappa_{m+1}(s))\big)-\sigma\big(s,X^m(\kappa_m(s))\big)\Big)\dd B_s \bigg\rangle_t\\
    &=\int_0^t K_\sigma(s,s)^2\Big(\sigma\big(s,X^{m+1}(\kappa_{m+1}(s))\big)-\sigma\big(s,X^m(\kappa_m(s))\big)\Big)^2\dd s,\quad t\in[0,T].
  \end{align*}
  Hence, using \eqref{ineq:phi} and applying It{\^o}'s formula for fixed $\epsilon>0$ and $\delta>1$ yields
  \begin{align}\label{eqmean.1}
    |\tilde{X}^m_t|&\leq \epsilon+\phi_{\delta\epsilon}(\tilde{X}^m_t)\notag\\
    &=\epsilon+\int_0^t\phi_{\delta\epsilon}'(\tilde{X}^m_s)\dd \tilde{X}^m_s+\frac{1}{2}\int_0^t \phi_{\delta\epsilon}''(\tilde{X}^m_s)\dd\langle \tilde{X}^m\rangle_s\notag\\
    &=\epsilon+\int_0^t\phi_{\delta\epsilon}'(\tilde{X}_s^m)K_\mu(s,s)\dd \tilde{Z}_s^m+\int_0^t\phi_{\delta\epsilon}'(\tilde{X}_s^m)\bigg(\int_0^s\partial_2 K_\mu(u,s)\dd \tilde{Z}_u^m\bigg)\dd s\notag\\
    &\qquad + \int_0^t\phi_{\delta\epsilon}'(\tilde{X}_s^m)\tilde{H}_s^m\dd s + \int_0^t\phi_{\delta\epsilon}'(\tilde{X}_s^m)K_\sigma(s,s)\dd \tilde{Y}_s^m \notag\\
    &\qquad + \frac{1}{2}\int_0^t \phi_{\delta\epsilon}''(\tilde{X}_s^m) K_\sigma(s,s)^2\Big(\sigma\big(s,X^{m+1}(\kappa_{m+1}(s))\big)-\sigma\big(s,X^m(\kappa_m(s))\big)\Big)^2 \dd s\notag\\
    &=:\epsilon+I_{1,t}^{\delta\epsilon}+I_{2,t}^{\delta\epsilon}+I_{3,t}^{\delta\epsilon}+I_{4,t}^{\delta\epsilon}+I_{5,t}^{\delta\epsilon},
  \end{align}
  for $t\in [0,T]$.

  In order to bound $\E[|\tilde{X}^m_t|]$, we shall estimate the five terms appearing in \eqref{eqmean.1} separately. We set
  \begin{equation*}
     U^m_t:=|X^m(t)-X^m(\kappa_m(t))|,\quad t\in [0,T].
  \end{equation*}

  For $I_{1,t}^{\delta\epsilon}$, we use the boundedness of $K_\mu$ (Assumption~\ref{ass:kernels}), the Lipschitz continuity of $\mu$ (Assumption~\ref{ass:coefficients}~(ii)) and the bound $\|\phi'_{\delta\epsilon}\|_\infty\leq 1$ to estimate
  \begin{align*}
    \E[I_{1,t}^{\delta\epsilon}]
    &=\E\bigg[\int_0^{t}\phi_{\delta\epsilon}'(\tilde{X}_s^m)K_\mu(s,s)\Big(\mu\big(s,X^{m+1}(\kappa_{m+1}(s))\big)-\mu\big(s,X^{m}(\kappa_m(s))\big)\Big)\dd s\bigg]\notag\\
    &\leq C \E\bigg[\int_0^{t}\big(|\tilde{X}_{s}^m|+U^m_s+ U^{m+1}_s\big)\dd s\bigg].
  \end{align*}
  Since, by Corollary~\ref{cor:bound},
  \begin{equation*}
    \E\bigg[\int_0^{t}(U^m_s+ U^{m+1}_s)\dd s\bigg]
    \leq C 2^{-\beta m^5}
  \end{equation*}
  for any $\beta \in (0,\gamma)$, we get
  \begin{align}\label{i1.1}
    \E[I_{1,t}^{\delta\epsilon}]
    &\leq C \bigg(2^{-\beta m^5}+\int_0^{t}\E\big[|\tilde{X}_{s}^m|\big]\dd s\bigg).
  \end{align}

  For $I_{2,t}^{\delta\epsilon}$, using the boundedness of $\partial_2K_\mu(u,s)$ on $\Delta_T$ (Assumption~\ref{ass:kernels}), the Lipschitz continuity of $\mu$ (Assumption~\ref{ass:coefficients}~(ii)) and the bound $\|\phi'_{\delta\epsilon}\|_\infty\leq 1$, we obtain
  \begin{align*}
    &\E[I_{2,t}^{\delta\epsilon}]\\
    &=\E\bigg[\int_0^{t}\phi_{\delta\epsilon}'(\tilde{X}_s^m)\bigg(\int_0^s \partial_2K_\mu(u,s)\Big(\mu\big(u,X^{m+1}(\kappa_{m+1}(u))\big)-\mu\big(u,X^{m}(\kappa_m(u))\big)\Big)\dd u\bigg)\dd s\bigg]\\
    & \leq C \E\bigg[\int_0^{t}\big(|\tilde{X}_{s}^m|+U^m_s+ U^{m+1}_s\big)\dd s\bigg].
  \end{align*}
  Hence, as for $I_{1,t}^{\delta \epsilon}$, we arrive at
  \begin{equation}\label{i1.1_2}
    \E[I_{2,t}^{\delta\epsilon}]
    \leq C \bigg(2^{-\beta m^{5}}+\int_0^{t}\E\big[|\tilde{X}_{s}^m|\big]\dd s\bigg).
  \end{equation}

  For $I_{3,t}^{\delta\epsilon}$, we have
  \begin{equation*}
    \E[I_{3,t}^{\delta\epsilon}]
    =\E\bigg[\int_0^{t}\phi_{\delta\epsilon}'(\tilde{X}_s^m)\tilde{H}^m_s\dd s\bigg].
  \end{equation*}
  Noting that an application of the integration by parts formula for semimartingales (cf. \cite[Theorem~(VI).38.3]{Rogers2000}) gives
  \begin{align*}
    \tilde{H}_s^m=\int_0^s \partial_2 K_\sigma(u,s)\dd \tilde{Y}_u^m=\partial_2 K_\sigma(s,s)\tilde{Y}_s^m-\int_0^s \tilde{Y}_u^m \partial_{21}K_\sigma(u,s)\dd u,
  \end{align*}
  we use $\|\phi'_{\delta\epsilon}\|_\infty\leq 1$ and the stochastic Fubini theorem to get
  \begin{align}\label{i2.1}
    \E[I_{3,t}^{\delta\epsilon}]
    &\leq \int_0^{t} \E[|\tilde{H}_{s}^m|]\dd s\notag\\
    &\leq \int_0^t |\partial_2 K_\sigma(s,s)|\E[|\tilde{Y}_{s}^m|]\dd s + \int_0^t \int_0^s |\partial_{21}K_\sigma(u,s)|\E[|\tilde{Y}_{u}^m|]\dd u \dd s\notag\\
    &\leq \int_0^t \E[|\tilde{Y}_{s}^m|]\bigg( | \partial_2 K_\sigma(s,s) | + \int_s^t |\partial_{21}K_\sigma(s,u)|\dd u \bigg)\dd s.
  \end{align}

  For $I_{4,t}^{\delta\epsilon}$, we get
  \begin{align}\label{i3.1}
    &\E[I^{\delta\epsilon}_{4,t}]\notag\\
    &\quad=\E\left[ \int_0^{t}\phi_{\delta\epsilon}'(\tilde{X}_s^m)K_\sigma(s,s)\Big(\sigma\big(s,X^{m+1}(\kappa_{m+1}(s))\big)-\sigma\big(s,X^m(\kappa_m(s))\big)\Big)\dd B_s \right]\notag\\
    &\quad=0,
  \end{align}
  since $I_{4,t}^{\delta\epsilon}$ is a martingale by \cite[p.73, Corollary 3]{Pro1992}, since $\E[\langle I_{4,t}^{\delta\epsilon} \rangle_t]<\infty$ for all $t\in[0,T]$ due to the boundedness of $K_\sigma$ (Assumption~\ref{ass:kernels}), the growth bound on $\sigma$ and Proposition~\ref{prop:Xm}.

  For $I_{5,t}^{\delta\epsilon}$, using the boundedness of $K_\sigma$ (Assumption~\ref{ass:kernels}), the H{\"o}lder continuity of~$\sigma$ (Assumption~\ref{ass:coefficients}~(ii)) and the inequality~\eqref{ineq:phi}, we get that
  \begin{align}
    \E[I_{5,t}^{\delta\epsilon}]
    &=\E\bigg[\frac{1}{2}\int_0^{t} \phi_{\delta\epsilon}''(\tilde{X}_s^m) K_\sigma(s,s)^2\Big(\sigma\big(s,X^{m+1}(\kappa_{m+1}(s))\big)-\sigma\big(s,X^m(\kappa_m(s))\big)\Big)^2 \dd s \bigg]\notag\\
    &\leq C \E\bigg[\int_0^{t} \phi_{\delta\epsilon}''(\tilde{X}_{s}^m)\big(|\tilde{X}^m({s})|+U^m_s+U^{m+1}_s\big)^{1+2\xi}\dd s\bigg]\notag\\
    &\leq C \E\bigg[\int_0^{t} \1_{[\frac{\epsilon}{\delta},\epsilon]}(|\tilde{X}^m({s})|) \frac{\big(|\tilde{X}^m({s})|+U^m_s+U^{m+1}_s\big)^{1+2\xi}}{|\tilde{X}^m({s})|\ln(\delta)}\dd s\bigg]\notag\\
    &\leq C\bigg( \frac{\epsilon^{2\xi}}{\ln(\delta)}+ \frac{\delta}{\epsilon\ln(\delta)}\E\bigg[ \int_0^{t}\big( U^m_s+ U^{m+1}_s\big)^{1+2\xi}\dd s\bigg]\bigg).\label{eq:YW_notenough}
  \end{align}
  Moreover, by Corollary~\ref{cor:bound}, we derive that
  \begin{equation*}
    \E\bigg[ \int_0^{t}\big( U^m_s+ U^{m+1}_s\big)^{1+2\xi}\dd s\bigg]
    \leq C 2^{-(1+2\xi)\beta m^{5}}
  \end{equation*}
  for any $\beta\in (0,\gamma)$ and, hence, we conclude
  \begin{equation}\label{ineq:i4}
    \E[I_{5,t}^{\delta\epsilon}]
    \leq C\bigg( \frac{\epsilon^{2\xi}}{\ln(\delta)}+ \frac{\delta}{\epsilon\ln(\delta)}2^{-(1+2\xi)\beta m^{5}}\bigg).
  \end{equation}

  Combining \eqref{eqmean.1} with the five estimates \eqref{i1.1}, \eqref{i1.1_2}, \eqref{i2.1}, \eqref{i3.1} and \eqref{ineq:i4}, we end up with
  \begin{align*}
    \E[|\tilde{X}^m_{t}|]
    &\leq C\bigg(2^{-\beta m^5} + \frac{\epsilon^{2\xi}}{\ln(\delta)}+ \frac{\delta}{\epsilon\ln(\delta)}2^{-(1+2\xi)\beta m^5} + \int_0^t \E[|\tilde{X}^m_{s}|]\dd s\\
    &\qquad + \int_0^t \E[|\tilde{Y}_{s}^m|] \bigg(  |\partial_2 K_\sigma(s,s) | + \int_s^{t} |\partial_{21} K_\sigma(s,u)|\dd u \bigg)\dd s \bigg).
  \end{align*}
  Therefore, choosing $\delta :=2^{\rho m^5}$ for $\rho \in (0,((1+2\xi)\beta)/2]$ and $\epsilon := 2^{-\frac{(1+2\xi)\beta}{2} m^5}$,
  we get
  \begin{align}\label{part1.1}
    \E[|\tilde{X}^m_{t}|]
    &\leq C\bigg(C_m + \int_0^t \E[|\tilde{X}^m_{s}|]\dd s\notag\\
    &\qquad + \int_0^t \E[|\tilde{Y}_{s}^m|] \bigg( |\partial_2 K_\sigma(s,s)| + \int_s^{t} |\partial_{21} K_\sigma(s,u)|\dd u \bigg)\dd s \bigg),
  \end{align}
  with
  \begin{equation}\label{eq:C_m}
    C_m := 2^{-\beta m^5} + m^{-5} 2^{-(1+2\xi)\beta \xi m^5}
    +m^{-5} 2^{-(\frac{(1+2\xi)\beta}{2}-\rho)m^5}.
  \end{equation}

  To apply a Gr{\"o}nwall lemma, we set
  \begin{equation*}
    M_{m}(t):=\sup\limits_{s\in [0,t]}\left(\E[|\tilde{X}^m_{s}|] + \E[|\tilde{Y}^m_{s}|]\right),\quad t\in [0,T],
  \end{equation*}
  and derive in the following an inequality of the form $M_{m}(t) \leq C_m + \int_0^t f(t-s)M_{m} (s) \dd s$ for a suitable function~$f$.

  To get a bound for $\E[|\tilde{Y}^m_{t}|]$, we first apply the integration by part formula to obtain
  \begin{align*}
    \tilde{X}^m_t &=\int_0^tK_\mu(s,t)\Big(\mu\big(s,X^{m+1}(\kappa_{m+1}(s))\big)-\mu\big(s,X_s^m(\kappa_m(s))\big)\Big)\dd s + \int_0^t K_\sigma(s,t)\dd \tilde{Y}^m_s\notag\\
    &= \int_0^t K_\mu(s,t)\Big(\mu\big(s,X^{m+1}(\kappa_{m+1}(s))\big)-\mu\big(s,X_s^m(\kappa_m(s))\big)\Big)\dd s\notag\\
    &\qquad + K_\sigma(t,t)\tilde{Y}^m_t-\int_0^t\partial_1 K_\sigma (s,t)\tilde{Y}^m_s\dd s,
  \end{align*}
  where we used that $K_\sigma(\cdot,t)$ is absolutely continuous for every $t\in [0,T]$. Since $K_\sigma(t,t)>C$ for some constant~$C>0$, $K_\mu$ is bounded (both by Assumption~\ref{ass:kernels}) and $\mu$ is Lipschitz continuous (Assumption~\ref{ass:coefficients}), we get
  \begin{align*}
    \E [|\tilde{Y}^m_{t}|]
    \leq & C \E \bigg [|\tilde{X}^m_{t}|
    +\int_0^{t} |K_\mu(s,t)|\Big|\mu\big(s,X^{m+1}(\kappa_{m+1}(s))\big)
    - \mu\big(s,X_s^m(\kappa_m(s))\big)\Big|\dd s \\
    &\qquad +\int_0^{t}|\partial_1 K_\sigma (s,t)||\tilde{Y}^m_s|\dd s\bigg] \\
    \leq & C \bigg(\E[|\tilde{X}^m_{t}|]
    + \int_0^{t} \E\big[|\tilde{X}^m_{s}\big|]\dd s
    +\E\bigg[\int_0^{t} (U^m_s+U^{m+1}_s) \dd s\bigg] \\
    &\qquad+\int_0^{t}|\partial_1 K_\sigma (s,t)|\E[|\tilde{Y}^m_{s}|]\dd s \bigg) \\
    \leq & C \bigg(2^{-\beta m^5}+\E[|\tilde{X}^m_{t}|]
    + \int_0^{t} \E\big[|\tilde{X}^m_{s}\big|]\dd s
    +\int_0^{t}|\partial_1 K_\sigma (s,t)|\E[|\tilde{Y}^m_{s}|]\dd s \bigg),
  \end{align*}
  where we used Corollary~\ref{cor:bound} for the last estimate. Hence, by \eqref{part1.1} we obtain
  \begin{align}\label{part2i.1}
    \E [|\tilde{Y}^m_{t}|]
    &\leq  C \bigg(C_m + \int_0^t \E[|\tilde{X}^m_{s}|]\dd s\notag\\
    &\quad + \int_0^t \E[|\tilde{Y}_{s}^m|] \bigg( |\partial_1 K_\sigma (s,t)|+ | \partial_2 K_\sigma(s,s)| + \int_s^{t} |\partial_{21} K_\sigma(s,u)|\dd u \bigg)\dd s \bigg).
  \end{align}

  By the bound on the partial derivatives of $K_\sigma$ made in Assumption~\ref{ass:kernels},  \eqref{part1.1} and \eqref{part2i.1} can be further estimated to
  \begin{align*}
    &\E[|\tilde{X}^m_{t}|]
    \leq C\bigg(C_m +\int_0^t\E[|\tilde{X}^m_{s}|]\dd s + \int_0^t (t-s)^{-\alpha}\, \E[|\tilde{Y}^m_{s}|]\dd s \bigg),\\
    &\E[|\tilde{Y}^m_{t}|]
    \leq C\bigg(C_m + \int_0^t\E[|\tilde{X}_{s}^m|]\dd s + \int_0^t (t-s)^{-\alpha}\, \E[|\tilde{Y}^m_{s}|]\dd s \bigg),
  \end{align*}
  for $\alpha\in [0,\frac{1}{2})$ as given in Assumption~\ref{ass:kernels}. Hence, we arrive at
  \begin{align*}
    M_{m}(t)&\leq \sup\limits_{s\in [0,t]}\E[|\tilde{X}^m_{t}|] + \sup\limits_{s\in [0,t]}\E[|\tilde{Y}^m_{t}|] \notag\\
    &\leq C \bigg(C_m +\int_0^t \big(1+(t-s)^{-\alpha}\big) M_{m}(s)\dd s\bigg).
  \end{align*}
  Note that Proposition~\ref{prop:Xm} secures the integrability of $M_m$. An application of the Gr{\"o}nwall's lemma for weak singularities (see e.g. \cite[Lemma~A.2]{Kruse2014}) reveals that $M_{m}(t)\leq C C_m$. The claimed summability of the sequence $(C_m)_{m\in\N}$ follows immediately by \eqref{eq:C_m}.
\end{proof}

\begin{remark}\label{rem:approximation}
  The approximation $\phi_{\delta\epsilon}$ of the absolute value, as used in the proof of Theorem~\ref{thm:existence}, was introduced by Gy{\"o}ngy and R{\'a}sonyi \cite{Gyongy2011}. It is a modification of the approximation originally used by Yamada and Watanabe \cite{Yamada1971} and appears to be more involved. While the original approximation of Yamada and Watanabe is sufficient to prove pathwise uniqueness, as we will also see in Section~\ref{sec:pathwise uniqueness}, to prove the existence of a solution the approximation $\phi_{\delta\epsilon}$ seems necessary. Indeed, one needs $\epsilon \to 0$ to ensure that $\phi_{\delta\epsilon} \to |\cdot|$ but the second parameter $\delta$ is essential to obtain the convergence of the Euler type approximation~$(X^m)_{m\in \N}$ in the case $\xi=0$ (i.e. $\sigma$ is $1/2$-H{\"o}lder continuous), as one can see from~\eqref{part1.1} and~\eqref{eq:C_m},
\end{remark}

With these preparation at hand we are ready to prove Theorem~\ref{thm:existence}.

\begin{proof}[Proof of Theorem~\ref{thm:existence}]
  \textit{Step~1:} The sequence $(X^m)_{m\in\N}$ is a Cauchy sequence in $L^p(\Omega\times [0,T])$ for $p$ given in the statement of Theorem~\ref{thm:existence}.
  
  By Fubini's theorem and Lemma~\ref{lem:1}, there exists a sequence $(C_m)_{m\in \N}$ such that
  \begin{equation*}
    \E\bigg[\int_0^T \left|X^{m+1}(s)-X^m(s)\right| \dd s\bigg]
    \leq C \sup\limits_{ s\in [0,T]} \E\left[|X^{m+1}(s)-X^m(s)|\right]
    \leq C_m
  \end{equation*}
  for $m\in \N$. Hence, using H{\"o}lder's inequality and the moment bound for $(X^m(t))_{t\in[0,T]}$ from Proposition~\ref{prop:Xm}, we get
  \begin{align*}
    &\E\bigg[\int_0^T |X^{m+1}(t)-X^m(t)|^p \dd t\bigg]\\
    &\qquad\leq \E\bigg[\int_0^T |X^{m+1}(t)-X^m(t)|^{2p-1}\dd t\bigg]^{\frac{1}{2}}
    \E\bigg[\int_0^T |X^{m+1}(t)-X^m(t)|\dd t\bigg]^{\frac{1}{2}}\\
    &\qquad\leq 2^{p-1}\bigg(1+\sup_{t\in [0,T]} |x_0(t)|^{2p-1}) \bigg)^{\frac{1}{2}}C_m^{\frac{1}{2}}.
  \end{align*}
  Due to the summability property of $(C_m)_{m\in\N}$, the sequence $(X^m)_{m\in\N}$ is a Cauchy sequence in $L^p(\Omega\times [0,T])$. Hence, there exists a process $X=(X_t)_{t\in [0,T]}\in L^p(\Omega\times [0,T])$, such that
  \begin{equation}\label{eq:convX}
    \lim\limits_{m\to\infty}\E\left[\int_0^T \left|X^{m}(s)-X_s\right|^p \dd s\right]=0.
  \end{equation}

  \textit{Step~2:} $(X_t)_{t\in [0,T]}$ yields a strong solution to the SVE~\eqref{eq:SVE}

  By construction, the processes $(X^m(t))_{t\in [0,T]}$ are $(\mathcal{F}_t)_{t\in[0,T]}$-progressively measurable on the given probability space $(\Omega,\mathcal{F},(\mathcal{F}_t)_{t\in[0,T]},\mathbb{P})$. Since \eqref{eq:convX} also shows the $L^p([0,t]\times\Omega)$-convergence of $(X^m_s)_{s\in[0,t]}$ to $(X_s)_{s\in[0,t]}$ for every $t\in[0,T]$, the completeness of the $L^p$ spaces (see e.g. \cite[Theorem~7.3]{Klenke2014}) yields $\mathcal{B}([0,t])\otimes\mathcal{F}_t$-measurability of $(s,\omega)\mapsto X_s(\omega)$, $(s,\omega)\in [0,t]\times\Omega$ for every $t\in[0,T]$. Hence, the process $(X_t)_{t\in[0,T]}$ is also $(\mathcal{F}_t)_{t\in[0,T]}$-progressively measurable on $(\Omega,\mathcal{F},(\mathcal{F}_t)_{t\in[0,T]},\mathbb{P})$. Moreover, by the growth conditions on $\mu$ and $\sigma$ (see Assumption~\ref{ass:coefficients}~(i)) and the integrability properties of $K_{\mu}$ and $K_{\sigma}$, we get that
  \begin{equation*}
    \int_0^t (|K_\mu(s,t)\mu(s,X_s)|+|K_\sigma(s,t)\sigma(s,X_s)|^2 )\dd s<\infty \quad \text{for all} t\in[0,T].
  \end{equation*}

  It remains to show that the process $(X_t)_{t\in [0,T]}$ fulfills the SVE~\eqref{eq:SVE}. To that end, we show that the two integrals in \eqref{eq:X_n} preserve the $L^p(\Omega\times [0,T])$-convergence. For the Riemann--Stieltjes integral, we use the boundedness of $K_\mu$, the Lipschitz continuity of $\mu$, H{\"o}lder's inequality and Fubini's theorem to obtain
  \begin{align*}
    &\E\left[ \int_0^T\Big| \int_0^t K_\mu(s,t)\left( \mu(s,X^m(\kappa_m(s)))-\mu(s,X_s) \right)\dd s \Big|^p \dd t \right] \\
    &\quad\quad \leq C \int_0^T\int_0^T \E\left[|X^m(\kappa_m(s))-X_s|^p\right]\dd s\dd t \\
    &\quad\quad \leq C \bigg( \E\bigg[ \int_0^T |X^m(\kappa_m(s))-X^m(s)|^p\dd s\bigg] + \E\bigg[ \int_0^T |X^m(s)-X_s|^p\dd s\bigg] \bigg)\to 0,
  \end{align*}
  as $m\to\infty$ by Corollary~\ref{cor:bound} and \eqref{eq:convX}. For the stochastic integral, we use Fubini's theorem, Burkholder--Davis--Gundy's inequality, H{\"o}lder's inequality, the boundedness of~$K_\sigma$, and the H{\"o}lder regularity of~$\sigma$ to get that
  \begin{align*}
    &\E\left[ \int_0^{T} \left| \int_0^t K_\sigma(s,t)\left( \sigma(s,X^m(\kappa_m(s)))-\sigma(s,X_s) \right)\dd B_s \right|^p\dd t \right]\\
    &\quad\qquad = \int_0^{T} \E\bigg[\big| \int_0^t K_\sigma(s,t)\left( \sigma(s,X^m(\kappa_m(s)))-\sigma(s,X_s) \right)\dd B_s \big|^{p}\bigg]\dd t \\
    &\quad\qquad\leq \int_0^{T} \E\bigg[ \int_0^t K_\sigma(s,t)^2\left( \sigma(s,X^m(\kappa_m(s)))-\sigma(s,X_s) \right)^2\dd s\bigg]^{\frac{p}{2}}\dd t \\
    &\quad\qquad\leq C\left( \int_0^{T} \int_0^T \E[|X^m(\kappa_m(s))-X_s|^{\frac{p}{2}+p\xi}] \dd s\dd t \right)\\
    &\quad\qquad\leq C\bigg(\E\bigg[\int_0^T |X^m(\kappa_m(s))-X^m(s)|^{\frac{p}{2}+p\xi} \dd s\bigg]+\E\bigg[\int_0^T |X^m(s)-X_s|^{\frac{p}{2}+p\xi} \dd s\bigg]\bigg).
  \end{align*}
  Thus, by Corollary~\ref{cor:bound} and the convergence $X^m\to X$ in $L^{\frac{p}{2}+p\xi}(\Omega\times [0,T])$ as $m\to\infty$, for $\xi\in [0,\frac{1}{2}]$, which is implied by the one in $L^p(\Omega\times [0,T])$, we see that the stochastic integral does preserve the $L^p(\Omega\times [0,T])$-convergence. Thus, we have proven that the limiting process $(X_t)_{t\in [0,T]}$ fulfills the SVE~\eqref{eq:SVE} for almost all $(t,\omega)\in [0,T]\times\Omega$. By Remark~\ref{rem_Kolmogorov}, $(X_t)_{t\in [0,T]}$ has an $\mathbb{P}$-a.s. continuous version, which fulfills the SVE~\eqref{eq:SVE} for all $t\in[0,T]$ for almost all $\omega\in\Omega$, and hence, is a strong solution of \eqref{eq:SVE}.
\end{proof}

\section{Pathwise uniqueness}\label{sec:pathwise uniqueness}

In this section we establish the pathwise uniqueness for the stochastic Volterra equation~\eqref{eq:SVE} under Assumptions~\ref{ass:kernels}, \ref{ass:coefficients}~(i), and under slightly weaker regularity assumptions on $\mu$ and $\sigma$ than Assumption~\ref{ass:coefficients}~(ii), namely an Osgood-type condition on~$\mu$ and the Yamada--Watanabe condition on~$\sigma$, as formulated in the next assumption.

\begin{assumption}\label{ass:Osg_YW}
  Let $\mu,\sigma\colon [0,T]\times\R\to\R$ be measurable functions such that:
  \begin{itemize}
    \item[(i)] there is some continuous, non-decreasing and concave function $\kappa\colon [0,\infty)\to [0,\infty)$ with $\kappa(0)=0$ and $\kappa(x)>0$ for $x>0$, such that, with the notation $\tilde{\kappa}(x):=\kappa(x)+|x|$,
    \begin{equation*}
      \int_0^\epsilon \frac{\dd x}{\big( \tilde{\kappa}(\sqrt[q]{x}) \big)^q}=\infty,
    \end{equation*}
    holds for all $\epsilon>0$ and $q\in(\frac{1}{1-\alpha},\frac{1}{1-\alpha}+\tilde{\epsilon})$ for some $\tilde{\epsilon}>0$, where $\alpha\in[0,\frac{1}{2})$ is given by Assumption~\ref{ass:kernels}~(ii), and
    \begin{equation*}
      |\mu(t,x)-\mu(t,y)| \leq \kappa(|x-y|),
    \end{equation*}
    for all $t\in[0,T]$, $x,y\in\R$,
    \item[(ii)] there is some continuous strictly increasing function $\rho\colon [0,\infty)\to [0,\infty)$ with $\rho(0)=0$ and $\rho(x)>0$ for $x>0$, such that
    \begin{equation*}
      \int_0^\epsilon \frac{\dd x}{\rho(x)^2}=\infty,
    \end{equation*}
    holds for all $\epsilon>0$, and
    \begin{equation*}
      |\sigma(t,x)-\sigma(t,y)| \leq \rho(|x-y|),
    \end{equation*}
    for all $t\in[0,T]$, $x,y\in\R$.
  \end{itemize}
\end{assumption}

\begin{remark}
  Choosing $\kappa(x)=C_\mu|x|$ and $\rho(x)=C_\sigma|x|^{\frac{1}{2}+\xi}$ shows that Assumption~\ref{ass:coefficients}~(ii) implies Assumption~\ref{ass:Osg_YW}. We note that if $\mu$ is assumed to be Lipschitz continuous and $\sigma$ to fulfill the Yamada--Watanabe condition, it is sufficient to use a fractional Gr{\"o}nwall lemma like the one in \cite[Lemma~A.2]{Kruse2014} instead of the fractional Bihari inequality in \eqref{ineq:Bihari}. Moreover, if one considers $K_\sigma=1$, the Osgood-type condition in Assumption~\ref{ass:Osg_YW}~(i) can be replaced by the classical Osgood condition for SDEs (see e.g. \cite[Chapter~5, Remark~2.16]{Karatzas1991}) since one can then use the classical instead of the fractional Bihari inequality and the application of integration by parts to the stochastic integral is not required.
\end{remark}

The main result of this section reads as follows.

\begin{theorem}\label{thm:uniqueness}
  Suppose Assumptions~\ref{ass:kernels}, \ref{ass:coefficients}~(i) and \ref{ass:Osg_YW}. Then, pathwise uniqueness holds for the stochastic Volterra equation~\eqref{eq:SVE}.
\end{theorem}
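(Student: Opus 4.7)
The plan is to adapt the Yamada--Watanabe technique already deployed in Lemma~\ref{lem:1}, but now comparing two $L^p$-solutions $X^1, X^2$ of~\eqref{eq:SVE} on the same filtered probability space. I set $\tilde X_t := X^1_t - X^2_t$, $\tilde Y_t := \int_0^t (\sigma(s,X^1_s) - \sigma(s,X^2_s))\dd B_s$ and $\tilde Z_t := \int_0^t (\mu(s,X^1_s) - \mu(s,X^2_s))\dd s$. Using the absolute continuity assumptions in Assumption~\ref{ass:kernels} and the classical/stochastic Fubini theorem as in the proof of Lemma~\ref{lem:semimartingale property}, I rewrite
\[
  \tilde X_t = \int_0^t K_\mu(s,s)\dd\tilde Z_s + \int_0^t\!\!\int_0^s \partial_2 K_\mu(u,s)\dd\tilde Z_u\dd s + \int_0^t \tilde H_s\dd s + \int_0^t K_\sigma(s,s)\dd\tilde Y_s,
\]
with $\tilde H_s:=\int_0^s \partial_2 K_\sigma(u,s)\dd\tilde Y_u$, exhibiting $\tilde X$ as a semimartingale.

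Next, instead of the Gy{\"o}ngy--R{\'a}sonyi smoothing used for existence, I invoke the original Yamada--Watanabe approximation $(\phi_n)$ built from the modulus $\rho$ of Assumption~\ref{ass:Osg_YW}~(ii): choose $a_n\downarrow 0$ with $\int_{a_n}^{a_{n-1}}\rho(x)^{-2}\dd x = n$ and a bump $\psi_n\ge 0$ supported in $[a_n,a_{n-1}]$ with $\int\psi_n = 1$ and $\psi_n(x)\le 2/(n\rho(x)^2)$; put $\phi_n(x):=\int_0^{|x|}\!\!\int_0^y\psi_n(z)\dd z\dd y$, so that $\phi_n(x)\uparrow|x|$, $\|\phi_n^\prime\|_\infty\le 1$, and $\phi_n''(x)\le \tfrac{2}{n\rho(|x|)^2}\1_{[a_n,a_{n-1}]}(|x|)$. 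Applying It{\^o}'s formula to $\phi_n(\tilde X_t)$ and taking expectations produces five terms analogous to $I_{1,t}^{\delta\epsilon}$--$I_{5,t}^{\delta\epsilon}$ in~\eqref{eqmean.1}. The drift-type contributions are controlled using $|\phi_n^\prime|\le 1$, the boundedness of $K_\mu$ and $\partial_2 K_\mu$, the Osgood bound $|\mu(s,X^1_s)-\mu(s,X^2_s)|\le\kappa(|\tilde X_s|)$ and Jensen's inequality (concavity of $\kappa$), yielding terms of the form $C\int_0^t\kappa(\E[|\tilde X_s|])\dd s$. The term involving $\tilde H$ is handled by the same integration-by-parts identity $\tilde H_s = \partial_2 K_\sigma(s,s)\tilde Y_s - \int_0^s\tilde Y_u\partial_{21}K_\sigma(u,s)\dd u$ as in~\eqref{i2.1}, combined with the bound $|\partial_2 K_\sigma(s,s)|+\int_s^t|\partial_{21}K_\sigma(s,u)|\dd u\le C(t-s)^{-\alpha}$. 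The martingale term vanishes in expectation. Finally, the quadratic variation contribution is estimated by
\[
  \E\Big[\tfrac{1}{2}\!\int_0^t \phi_n''(\tilde X_s)K_\sigma(s,s)^2(\sigma(s,X^1_s)-\sigma(s,X^2_s))^2\dd s\Big]\le \tfrac{C t}{n}\xrightarrow{n\to\infty}0,
\]
exactly because $\phi_n''\cdot\rho^2\le 2/n$. Letting $n\to\infty$ I obtain
\[
  \E[|\tilde X_t|]\le C\int_0^t\tilde\kappa(\E[|\tilde X_s|])\dd s + C\int_0^t\E[|\tilde Y_s|]\big(1+(t-s)^{-\alpha}\big)\dd s.
\]

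To close the system, I bound $\E[|\tilde Y_t|]$ by proceeding exactly as in the derivation of~\eqref{part2i.1}: integration by parts applied to $\int_0^t K_\sigma(s,t)\dd\tilde Y_s$ (using the absolute continuity of $K_\sigma(\cdot,t)$) gives $\tilde X_t = \int_0^t K_\mu(s,t)(\mu(s,X^1_s)-\mu(s,X^2_s))\dd s + K_\sigma(t,t)\tilde Y_t - \int_0^t\partial_1 K_\sigma(s,t)\tilde Y_s\dd s$, and the assumption $|K_\sigma(t,t)|\ge C>0$ allows me to solve for $\tilde Y_t$ and to deduce an inequality of the same type. Writing $M(t):=\sup_{s\le t}(\E[|\tilde X_s|]+\E[|\tilde Y_s|])$ and using that $\tilde\kappa$ is nondecreasing, the two bounds combine into
\[
  M(t)\le C\int_0^t \tilde\kappa(M(s))\big(1+(t-s)^{-\alpha}\big)\dd s,\qquad t\in[0,T].
\]
The final step is a fractional Bihari inequality: raising both sides to a power $q$ slightly above $\tfrac{1}{1-\alpha}$ (using a H{\"o}lder estimate on $\int_0^t(t-s)^{-\alpha}\dd s$) turns this into an integral inequality whose comparison function diverges at $0$ precisely by the condition $\int_0^\epsilon \dd x/\tilde\kappa(\sqrt[q]{x})^q=\infty$ in Assumption~\ref{ass:Osg_YW}~(i), forcing $M\equiv 0$ and hence $X^1=X^2$ indistinguishably.

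The principal obstacle is coupling the singular Volterra kernel $(t-s)^{-\alpha}$ with the weak moduli $\kappa,\rho$: a classical Bihari argument does not apply because the kernel is not integrable at rate $1$, so the Osgood condition has to be strengthened in a $q$-dependent manner (hence the range $q\in(\tfrac{1}{1-\alpha},\tfrac{1}{1-\alpha}+\tilde\epsilon)$ in Assumption~\ref{ass:Osg_YW}~(i)), and the standard Yamada--Watanabe estimate on the quadratic variation must be reconciled with the coupled system in $(\E[|\tilde X_t|],\E[|\tilde Y_t|])$ produced by the kernel integration-by-parts trick.
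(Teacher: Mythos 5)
Your proposal follows essentially the same route as the paper's proof: the semimartingale decomposition of $\tilde X$ via Lemma~\ref{lem:semimartingale property}, the classical Yamada--Watanabe approximation $(\phi_n)$ built from $\rho$, the five-term It{\^o} estimate with the $\tilde H$ term handled by integration by parts, the closing of the system through $\E[|\tilde Y_t|]$ using $|K_\sigma(t,t)|\geq C$, and the fractional Bihari inequality under the $q$-dependent Osgood condition. The argument is correct and matches the paper's in all essential steps.
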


\begin{proof}
  Since the proof relies partly on similar techniques as the proof of Lemma~\ref{lem:1}, we try to give a condense presentation and refer to the analogue calculation in Section~\ref{sec:existence}.

  Let $(X_t^1)_{t\in[0,T]}$ and $(X_t^2)_{t\in[0,T]}$ be solutions to the SVE~\eqref{eq:SVE}. Analogously to Section~\ref{sec:existence}, we define  $Y_t^i:=\int_0^t \sigma(s,X_s^i)\dd B_s$ and $H_t^i:=\int_0^t\partial_2 K_\sigma(s,t)\dd Y_s^i$, for $i=1,2$, as well as  $\tilde{Y}_t:=Y_t^1-Y_t^2$, $\tilde{X}_t:=X_t^1-X_t^2$, $\tilde{H}_t:=H_t^1-H_t^2$, and $\tilde{Z}_t:=\int_0^t\big(\mu(s,X^1_s)-\mu(s,X^2_s)\big)\dd s$, for $t\in [0,T]$. By Lemma~\ref{lem:semimartingale property}, we obtain the semimartingale decomposition
  \begin{align}\label{semmart}
    \tilde{X}_t&=\int_0^tK_\mu(s,s)(\mu(s,X_s^1)-\mu(s,X_s^2))\dd s +\int_0^t\int_0^s \partial_2 K_\mu(u,s)\dd \tilde{Z}_u\dd s \nonumber\\
    &\qquad +\int_0^t\tilde{H}_s\dd s+\int_0^tK_\sigma(s,s)\dd \tilde{Y}_s,\quad t\in[0,T].
  \end{align}

  To construct an approximation of the absolute value by smooth functions allowing us to apply It{\^o}'s formula, we use the classical approximation of Yamada--Watanabe~\cite{Yamada1971} for simplicity, cf. Remark~\ref{rem:approximation}. Based on the strictly increasing function $\rho$ from Assumption~\ref{ass:Osg_YW}~(ii), we define a sequence $(\phi_n)_{n\in\N}$ of functions mapping from $\R$ to $\R$ that approximates the absolute value in the following way: Let $(a_n)_{n\in\N}$ be a strictly decreasing sequence with $a_0=1$ such that $a_n\to 0$ as $n\to \infty$ and
  \begin{align*}
    \int_{a_n}^{a_{n-1}}\frac{1}{\rho(x)^2}\dd x=n.
  \end{align*}
  Furthermore, we define a sequence of mollifiers: let $(\psi_n)_{n\in\N}\in C_0^{\infty}(\R)$ be smooth functions with compact support such that $\textup{supp}(\psi_n)\subset (a_n,a_{n-1})$, and with the properties
  \begin{align}\label{prop}
    0\leq \psi_n(x)\leq \frac{2}{n\rho(x)^2}, \quad \forall x\in\R,
    \quad\text{and}\quad
    \int_{a_n}^{a_{n-1}}\psi_n(x)\dd x=1.
  \end{align}
  We set
  \begin{align*}
    \phi_n(x):=\int_0^{|x|}\left(\int_0^y \psi_n(z)\dd z\right)\dd y,\quad x\in \R.
  \end{align*}
  By \eqref{prop} and the compact support of $\psi_n$, it follows that $\phi_n(\cdot)\to |\cdot|$ uniformly as $n\to \infty$. Since every $\psi_n$ and, thus, every $\phi_n$ is zero in a neighborhood around zero, the functions~$\phi_n$ are smooth with
  \begin{equation*}
    \|\phi_n'\|_\infty\leq 1,
    \quad
    \phi_n'(x)=\textup{sgn}(x)\int_0^{|x|}\psi_n(y)\dd y,
    \quad\text{and}\quad
    \phi_n''(x)=\psi_n(|x|)
    \quad\text{for } x\in\R.
  \end{equation*}

  Since the quadratic variation of the semimartingale $(\tilde{X}_t)_{t\in[0,T]}$ is given by
  \begin{align*}
    \langle\tilde{X}\rangle_t=\int_0^t K_\sigma(s,s)^2\big(\sigma(s,X_s^1)-\sigma(s,X_s^2)\big)^2\dd s,
    \quad t\in [0,T],
  \end{align*}
  we get, by applying It{\^o}'s formula and using the semimartingale decomposition \eqref{semmart}, that
  \begin{align}\label{eqmean}
    \phi_n(\tilde{X}_t)
    =&\int_0^t\phi_n'(\tilde{X}_s)\dd\tilde{X}_s+\frac{1}{2}\int_0^t \phi_n''(\tilde{X}_s)\dd\langle \tilde{X}\rangle_s\notag\\
    =&\int_0^t\phi_n'(\tilde{X}_s)K_\mu(s,s)(\mu(s,X_s^1)-\mu(s,X_s^2))\dd s + \int_0^t \phi_n'(\tilde{X}_s)\bigg(\int_0^s \partial_2 K_\mu(u,s)\dd \tilde{Z}_u\bigg)\dd s \notag\\
    &+ \int_0^t\phi_n'(\tilde{X}_s)\tilde{H}_s\dd s
    + \int_0^t\phi_n'(\tilde{X}_s)K_\sigma(s,s)\dd \tilde{Y}_s\notag\\
     &+ \frac{1}{2}\int_0^t \phi_n''(\tilde{X}_s) K_\sigma(s,s)^2\big(\sigma(s,X_s^1)-\sigma(s,X_s^2)\big)^2 \dd s\notag\\
    =:&I_{1,t}^n+I_{2,t}^n+I_{3,t}^n+I_{4,t}^n+I_{5,t}^n
  \end{align}
  for $t\in [0,T]$.

  For $I_{1,t}^n$, we use Assumption~\ref{ass:Osg_YW}~(i), the boundedness of~$K_\mu$ (Assumption~\ref{ass:kernels}), the bound $\|\phi'_n\|_\infty~\leq~1$ and Jensen's inequality to estimate
  \begin{equation}\label{i1}
    \E[I_{1,t}^n]\leq C \int_0^t \E[\kappa(|\tilde{X}_{s}|)]\dd s\leq C \int_0^t \kappa(\E[|\tilde{X}_{s}|])\dd s.
  \end{equation}
  For $I_{2,t}^n$, we additionally use the boundedness of $\partial_2 K_\mu(u,s)$ on $\Delta_T$ to obtain
  \begin{equation}\label{i12}
    \E[I_{2,t}^n]\leq C \int_0^t\kappa(\E[|\tilde{X}_{s}|])\dd s.
  \end{equation}
  For $I_{3,t}^n$, similarly to \eqref{i2.1}, we use the integration by parts formula to estimate
  \begin{align}\label{i2}
    \E[I_{3,t}^n]&\leq \int_0^t \E[|\tilde{H}_{s}|]\dd s\notag\\
    &\leq \int_0^t |\partial_2 K_\sigma(s,s)|\E[|\tilde{Y}_{s}|]\dd s + \int_0^t \int_0^s |\partial_{21}K_\sigma(u,s)|\E[|\tilde{Y}_{u}|]\dd u \dd s\notag\\
    &\leq \int_0^t \E[|\tilde{Y}_{s}|] \bigg( \partial_2 K_\sigma(s,s) + \int_s^t |\partial_{21}K_\sigma(s,u)|\dd u \bigg)\dd s.
  \end{align}
  For $I_{4,t}^n$, since $I_{4,t}^n$ is a martingale by \cite[p.73, Corollary~3]{Pro1992} due to the boundedness of $K_\sigma$, the growth bound on $\sigma$ and Lemma~\ref{lem:bound}, we get
  \begin{align}
    \E[I^n_{4,t}]=\E\left[ \int_0^{t}\phi_n'(\tilde{X}_s)K_\sigma(s,s)(\sigma(s,X_s^1)-\sigma(s,X_s^2))\dd B_s \right]=0,\label{i3}
  \end{align}
  For $I_{5,t}^n$, we get by using the boundedness of $K_\sigma$ (Assumption~\ref{ass:kernels}), the regularity of $\sigma$ from Assumption~\ref{ass:Osg_YW}~(ii), and the inequality~\eqref{prop} that
  \begin{align}\label{i4}
    \E[I_{5,t}^n]&\leq C \E\bigg[\int_0^t \phi_n''(\tilde{X}_{s})\rho(|\tilde{X}_{s}|)^2\dd s\bigg] \notag\\
    &\leq C \E\bigg[\int_0^t \frac{2}{n\rho(|\tilde{X}_{s}|)^2}\rho(|\tilde{X}_{s}|)^2\dd s\bigg]\notag\\
    &\leq \frac{C}{n},
  \end{align}
  for some $C>0$.

  Finally, sending $n\to\infty$ and combining the five previous estimates \eqref{i1}, \eqref{i12}, \eqref{i2}, \eqref{i3} and \eqref{i4} with \eqref{eqmean} implies
  \begin{equation}\label{part1}
    \E[|\tilde{X}_{t}|]\leq  C \int_0^t\kappa(\E[|\tilde{X}_{s}|])\dd s
    + \int_0^t \E[|\tilde{Y}_{s}|] \bigg( \partial_2 K_\sigma(s,s) + \int_s^t |\partial_{21}K_\sigma(s,u)|\dd u \bigg)\dd s.
  \end{equation}

  To apply a Gr{\"o}nwall lemma, we set
  \begin{equation*}
    M(t):=\sup\limits_{s\in [0,t]}\left(\E[|\tilde{X}_{s}|] + \E[|\tilde{Y}_{s}|]\right),\quad t\in [0,T],
  \end{equation*}
  and derive in the following an inequality of the form $M(t)\leq \int_0^t f(t-s)\tilde{\kappa}(M(s))\dd s$ for suitable functions~$f$ and $\tilde{\kappa}$. To find a bound for $\E[|\tilde{Y}_{t}|]$, we apply the integration by part formula to obtain
  \begin{align}\label{equiv}
    \tilde{X}_t&=\int_0^t K_\mu(s,t)(\mu(s,X_s^1)-\mu(s,X_s^2))\dd s + \int_0^t K_\sigma(s,t)\dd \tilde{Y}_s\notag\\
    &= \int_0^t K_\mu(s,t)(\mu(s,X_s^1)-\mu(s,X_s^2))\dd s + K_\sigma(t,t)\tilde{Y}_t-\int_0^t\partial_1 K_\sigma (s,t)\tilde{Y}_s\dd s
  \end{align}
  keeping in mind that that $K_\sigma(\cdot,t)$ is absolutely continuous for every $t\in [0,T]$. Due to $|K_\sigma(t,t)|> C$ for some constant~$C>0$, we can rearrange \eqref{equiv} and use \eqref{part1} to get
  \begin{align}\label{part2i}
    \E\left[|\tilde{Y}_{t}|\right]
    \leq & C\bigg(
    \int_0^t\E\big[|\mu(s,X_{s}^1)-\mu(s,X_{s}^2)|\big]\dd s \notag\\
    &+ \E\big[|\tilde{X}_{t}|\big] + \int_0^t |\partial_1 K_\sigma(s,t)|\E\big[|\tilde{Y}_{s}|\big]\dd s\bigg)\notag\\
    \leq & C\bigg( \int_0^t\Big(\E\big[|\tilde{X}_{s}|\big]+\kappa(\E\big[|\tilde{X}_{s}|\big])\Big)\dd s\notag\\
    &+ \int_0^t \E\big[|\tilde{Y}_{s}|\big] \bigg( \left|\partial_1 K_\sigma(s,t)\right| + \left|\partial_2 K_\sigma(s,s)\right| + \int_s^t\left|\partial_{21}K_\sigma(s,u)\right|\dd u\bigg) \dd s\bigg).
  \end{align}
  Using Assumption~\ref{ass:kernels} to bound the partial derivative terms in \eqref{part1} and \eqref{part2i}, we end up with
  \begin{align}\label{ineq:Bihari}
    M(t)&\leq \sup\limits_{s\in [0,t]}\E[|\tilde{X}_{t}|] + \sup\limits_{s\in [0,t]}\E[|\tilde{Y}_{t}|] \notag\\
    &\leq C\bigg( \int_0^t \Big(\sup\limits_{u \in [0,s]} \E[|\tilde{X}_{u}|]+\kappa\Big(\sup\limits_{u \in [0,s]} \E[|\tilde{X}_{u}|]\Big)\Big)\dd s + \int_0^t (t-s)^{-\alpha} \sup\limits_{u \in [0,s]}\E[|\tilde{Y}_{u}|]\dd s \bigg)\notag\\
    &\leq C \int_0^t(t-s)^{-\alpha}\tilde{\kappa}(M(s))\dd s,
  \end{align}
  where $\tilde{\kappa}(x):=\kappa(x)+|x|$. An application of the fractional Bihari inequality, \cite[Theorem~2.3]{Ouaddah}, with sending $q\to \frac{1}{1-\alpha}$ like in \cite[proof of Theorem~3.1, Step~1]{Ouaddah} with the condition on $\tilde{\kappa}$ in Assumption~\ref{ass:Osg_YW}~(i) that $M(t)=0$ holds. Hence, $\tilde{X}_t=0$ almost surely, and, thus, by the continuity of the solutions, the processes~$(X^1_t)_{t\in [0,T]}$ and $(X^2_t)_{t\in [0,T]}$ are indistinguishable.
\end{proof}

\bibliography{literature}{}

\providecommand{\bysame}{\leavevmode\hbox to3em{\hrulefill}\thinspace}
\providecommand{\MR}{\relax\ifhmode\unskip\space\fi MR }
\providecommand{\MRhref}[2]{%
  \href{http://www.ams.org/mathscinet-getitem?mr=#1}{#2}
}
\providecommand{\href}[2]{#2}
\begin{thebibliography}{AJCLP21}

\bibitem[AJ21]{AbiJaber2021b}
Eduardo Abi~Jaber, \emph{Weak existence and uniqueness for affine stochastic
  {V}olterra equations with {$L^1$}-kernels}, Bernoulli \textbf{27} (2021),
  no.~3, 1583--1615.

\bibitem[AJCLP21]{AbiJaber2021}
Eduardo Abi~Jaber, Christa Cuchiero, Martin Larsson, and Sergio Pulido, \emph{A
  weak solution theory for stochastic {V}olterra equations of convolution
  type}, Ann. Appl. Probab. \textbf{31} (2021), no.~6, 2924--2952.

\bibitem[AJEE19a]{AbiJaberElEuch2019}
Eduardo Abi~Jaber and Omar El~Euch, \emph{Markovian structure of the {V}olterra
  {H}eston model}, Statist. Probab. Lett. \textbf{149} (2019), 63--72.

\bibitem[AJEE19b]{AbiJaberElEuch2019b}
\bysame, \emph{Multifactor approximation of rough volatility models}, SIAM J.
  Financial Math. \textbf{10} (2019), no.~2, 309--349.

\bibitem[AJLP19]{AbiJaber2019}
Eduardo Abi~Jaber, Martin Larsson, and Sergio Pulido, \emph{Affine {V}olterra
  processes}, Ann. Appl. Probab. \textbf{29} (2019), no.~5, 3155--3200.

\bibitem[AN97]{Alos1997}
Elisa Al{\`o}s and David Nualart, \emph{Anticipating stochastic {V}olterra
  equations}, Stochastic Process. Appl. \textbf{72} (1997), no.~1, 73--95.

\bibitem[BFG16]{Bayer2016}
Christian Bayer, Peter Friz, and Jim Gatheral, \emph{Pricing under rough
  volatility}, Quant. Finance \textbf{16} (2016), no.~6, 887--904.

\bibitem[BM80a]{Berger1980a}
Marc~A. Berger and Victor~J. Mizel, \emph{Volterra equations with {I}t\^{o}
  integrals. {I}}, J. Integral Equations \textbf{2} (1980), no.~3, 187--245.

\bibitem[BM80b]{Berger1980b}
\bysame, \emph{Volterra equations with {I}t\^{o} integrals. {II}}, J. Integral
  Equations \textbf{2} (1980), no.~4, 319--337.

\bibitem[CD01]{Coutin2001}
L.~Coutin and L.~Decreusefond, \emph{Stochastic {V}olterra equations with
  singular kernels}, Stochastic analysis and mathematical physics, Progr.
  Probab., vol.~50, Birkh\"auser Boston, Boston, MA, 2001, pp.~39--50.

\bibitem[CLP95]{Cochran1995}
W.~George Cochran, Jung-Soon Lee, and J\"urgen Potthoff, \emph{Stochastic
  {V}olterra equations with singular kernels}, Stochastic Process. Appl.
  \textbf{56} (1995), no.~2, 337--349.

\bibitem[CT20]{Cuchiero2020}
Christa Cuchiero and Josef Teichmann, \emph{Generalized {F}eller processes and
  {M}arkovian lifts of stochastic {V}olterra processes: the affine case}, J.
  Evol. Equ. \textbf{20} (2020), no.~4, 1301--1348.

\bibitem[EER19]{ElEuch2019}
Omar El~Euch and Mathieu Rosenbaum, \emph{The characteristic function of rough
  {H}eston models}, Math. Finance \textbf{29} (2019), no.~1, 3--38.

\bibitem[GR11]{Gyongy2011}
Istv{\'a}n Gy{\"o}ngy and Mikl{\'o}s R{\'a}sonyi, \emph{A note on {E}uler
  approximations for {SDE}s with {H}\"{o}lder continuous diffusion
  coefficients}, Stochastic Process. Appl. \textbf{121} (2011), no.~10,
  2189--2200.

\bibitem[GRR71]{Garsia1970}
A.~M. Garsia, E.~Rodemich, and H.~Rumsey, Jr., \emph{A real variable lemma and
  the continuity of paths of some {G}aussian processes}, Indiana Univ. Math. J.
  \textbf{20} (1970/71), 565--578.

\bibitem[Kal21]{Kalinin2021}
Alexander Kalinin, \emph{Support characterization for regular path-dependent
  stochastic {V}olterra integral equations}, Electron. J. Probab. \textbf{26}
  (2021), Paper No. 29, 29.

\bibitem[Kle14]{Klenke2014}
Achim Klenke, \emph{Probability theory}, second ed., Universitext, Springer,
  London, 2014, A comprehensive course.

\bibitem[Kru14]{Kruse2014}
Raphael Kruse, \emph{Strong and weak approximation of semilinear stochastic
  evolution equations}, Lecture Notes in Mathematics, vol. 2093, Springer,
  Cham, 2014.

\bibitem[KS91]{Karatzas1991}
Ioannis Karatzas and Steven~E. Shreve, \emph{Brownian motion and stochastic
  calculus}, second ed., Graduate Texts in Mathematics, vol. 113,
  Springer-Verlag, New York, 1991.

\bibitem[MS15]{Mytnik2015}
Leonid Mytnik and Thomas~S. Salisbury, \emph{{Uniqueness for Volterra-type
  stochastic integral equations}}, ArXiv preprint arXiv:1502.05513 (2015).

\bibitem[OHNO21]{Ouaddah}
A.~Ouaddah, J.~Henderson, J.~J. Nieto, and A.~Ouahab, \emph{A fractional bihari
  inequality and some applications to fractional differential equations and
  stochastic equations}, Mediterranean Journal of Mathematics \textbf{18}
  (2021).

\bibitem[{\O}Z93]{Oksendal1993}
Bernt {\O}ksendal and Tu~Sheng Zhang, \emph{The stochastic {V}olterra
  equation}, Barcelona {S}eminar on {S}tochastic {A}nalysis ({S}t. {F}eliu de
  {G}u\'{\i}xols, 1991), Progr. Probab., vol.~32, Birkh{\"a}user, Basel, 1993,
  pp.~168--202.

\bibitem[PP90]{Pardoux1990}
\'{E}tienne Pardoux and Philip Protter, \emph{Stochastic {V}olterra equations
  with anticipating coefficients}, Ann. Probab. \textbf{18} (1990), no.~4,
  1635--1655.

\bibitem[Pro85]{Protter1985}
Philip Protter, \emph{Volterra equations driven by semimartingales}, Ann.
  Probab. \textbf{13} (1985), no.~2, 519--530.

\bibitem[Pro92]{Pro1992}
\bysame, \emph{Stochastic integration and differential equation}, second ed.,
  Springer-Verlag, Berlin, Heidelberg, 1992.

\bibitem[RW00]{Rogers2000}
L.~C.~G. Rogers and David Williams, \emph{Diffusions, {M}arkov processes, and
  martingales. {V}ol. 2}, Cambridge Mathematical Library, Cambridge University
  Press, Cambridge, 2000, It\^{o} calculus, Reprint of the second (1994)
  edition.

\bibitem[RY99]{Revuz1999}
Daniel Revuz and Marc Yor, \emph{Continuous martingales and {B}rownian motion},
  third ed., Grundlehren der mathematischen Wissenschaften [Fundamental
  Principles of Mathematical Sciences], vol. 293, Springer-Verlag, Berlin,
  1999.

\bibitem[Ver12]{Veraar2012}
Mark Veraar, \emph{The stochastic {F}ubini theorem revisited}, Stochastics
  \textbf{84} (2012), no.~4, 543--551.

\bibitem[Wan08]{Wang2008}
Zhidong Wang, \emph{Existence and uniqueness of solutions to stochastic
  {V}olterra equations with singular kernels and non-{L}ipschitz coefficients},
  Statist. Probab. Lett. \textbf{78} (2008), no.~9, 1062--1071.

\bibitem[YW71]{Yamada1971}
Toshio Yamada and Shinzo Watanabe, \emph{On the uniqueness of solutions of
  stochastic differential equations}, J. Math. Kyoto Univ. \textbf{11} (1971),
  155--167.

\bibitem[Zha10]{Zhang2010}
Xicheng Zhang, \emph{Stochastic {V}olterra equations in {B}anach spaces and
  stochastic partial differential equation}, J. Funct. Anal. \textbf{258}
  (2010), no.~4, 1361--1425.

\end{thebibliography}
\bibliographystyle{amsalpha}

\end{document}